\documentclass[a4paper,12pt]{article}

\usepackage{amsmath,amsthm,amssymb}
\usepackage{enumerate}
\usepackage{graphicx}
\usepackage{titlesec}
\usepackage{empheq}
\usepackage{color}
\usepackage{xfrac,bigints}
\usepackage{comment}
\usepackage{subfigure}
\usepackage{caption}
\usepackage[normalem]{ulem}

\newtheorem{Th}{Theorem}[section]
\newtheorem{lemma}[Th]{Lemma}
\newtheorem{remark}[Th]{Remark}

\newtheorem{a priori condition}[Th]{A priori condition}

\newtheorem{assumption}[Th]{Assumption}

\titleformat*{\section}{\normalsize\bfseries}
\titleformat*{\subsection}{\normalsize\bfseries}
\renewcommand{\thesection}{{{\normalsize\arabic{section}}}}

\renewcommand{\theequation}{{{\thesection.\arabic{equation}}}}

\newcommand{\pf}{\noindent{\bf Proof}}

\def\R{{\mathbb R}}

\def\DS{\displaystyle}

\def\dvg{\mbox{\rm div}}

\definecolor{purple}{rgb}{.75, 0, .25}

\makeatletter
\long\def\@makefntext#1{\parindent 1em\noindent
\@hangfrom{\hbox to 1.8em{\hss$^{\@thefnmark}$}}#1}
\makeatother

\begin{document}

\title{Explicit form of relaxation tensor for isotropic extended Burgers model and its spectral inversion}

\author{
Youjun Deng\thanks{School of Mathematics and Statistics, Central South University,Changsha 410083, P.R. China (youjundeng@csu.edu.cn).}
\and
Ching-Lung Lin\thanks{Department of Mathematics, National Cheng-Kung University, Tainan 701, Taiwan (Email:
cllin2@mail.ncku.edu.tw).}
\and Gen Nakamura\thanks{Department of
Mathematics, Hokkaido University, Sapporo 060-0808, Japan and Research Center of Mathematics for Social Creativity, Research Institute for Electronic Science, Hokkaido University, Sapporo 060-0812, Japan (Email: gnaka@math.sci.hokudai.ac.jp).}}

\date{}
\maketitle

\begin{abstract}
Concerning the anelastic nature of Earth, the quasi-static extended Burgers model (abbreviated by q-EBM), an integro-differential system, is used to study the free oscillation of Earth (abbreviated by FOE). In this paper, we first provide a general method to obtain an explicit form of the relaxation tensor for inhomogeneous isotropic q-EBM. Then, we apply it to compute the eigenvalues of the free oscillation of Earth, assuming that Earth is a unit ball modeled as a homogeneous and isotropic q-EBM. So far, an analytical and systematic way to compute the eigenvalues of the FOE has been missing when modeling Earth as a q-EBM. In particular, we compute some clusters of eigenvalues (abbreviated by C-ev's). To be more precise, integrating by parts with respect to time of the q-EBM under the assumption that the initial strain is zero, the q-EBM becomes the sum of two terms. 
The first term, called the instantaneous term, doesn't have any integration with respect to time, but the second term, called the memory term, has such an integration.  Then, consider the eigenvalues of the instantaneous part of the q-EBM. The eigenfunctions of C-ev's share the same eigenfunctions of the instantaneous part. However, the C-ev's may be shifted from the eigenvalues of the instantaneous part. Further, we analyze the structure of C-ev's and provide an inversion formula identifying the q-EBM from the C-ev's. 

\bigskip

\noindent
{\bf Keywords:} anelastic, viscoelasticity, relaxation tensor, extended Burgers model, quasi-static Boltzmann-type viscoelastic system.

\noindent
{\bf MSC(2010): } 35Q74, 35Q86, 35R09.
\end{abstract}

\renewcommand{\theequation}{\thesection.\arabic{equation}}

\section{Introduction}\label{introduction}
\label{sec1}
\setcounter{equation}{0}
This paper aims to study clusters of eigenvalues referred to as the C-ev's of the free oscillation of the Earth (abbreviated by FOE), which we model as a homogeneous isotropic extended Burgers model (abbreviated by EBM). In \cite{YP}, the importance of the EBM emerged in geophysics. The authors of this paper argue that this model can capture the full range of observed mechanical behavior of upper mantle material and reconcile it with the behavior of the Earth's mantle as a whole. For other applications of EBM in geophysics, see \cite{Ivins, LauFaul_2019, Harry} and the references therein. As the first attempt, we provide an analytic method to generate the C-ev's and analysis examining the structure of C-ev's.  

To begin with, we first give a bird's eye view of the C-ev's of FOE before getting into details. The equation of motion for the EBM yields the Boltzmann-type viscoelastic system (abbreviated by BVS). This is an integro-differential system given as the acceleration term 
equal to the divergence of stress over each time interval $[0,t]$ for $t\ge0$. Here, the stress is given as a convolution with a relaxation tensor, which is convoluted with the time derivative of strain. Assuming the strain is zero at the initial time, $t=0$, and integrating the BVS by parts with respect to time, it becomes that the acceleration term is equal to the sum of the elastic term and the memory term. The former and latter terms are referred to as the instantaneous term and memory term, respectively. For simplicity of notation, we assume the density of the EBM is equal to 1 throughout this paper.

At this stage, we replace the two times derivative with respect to time in the BVS by $z^2$-multiplication with $z\in{\mathbb C}$ combined with the traction-free boundary condition on the boundary $\partial\Omega$ of the Earth $\Omega$, assuming it as an open ball in ${\mathbb R}^3$ with radius $R>0$ centered at the origin for simplicity. Then, we have an eigenvalue problem with a parameter $t\ge0$. In this problem, we refer to the integro-differential operator that does not contain the parameter $z$ as a quasi-static EBM (abbreviated by q-EBM). To describe the C-ev's, we first consider the eigenvalues of the boundary value problem for the mentioned instantaneous term in $\Omega$ with the traction-free boundary condition on $\partial\Omega$. Then, for the eigenvectors associated with these eigenvalues, the C-ev's look for the eigenvalues for the mentioned eigenvalue problem for the q-EBM. For further arguments, it is important to note that the elastic tensors of springs in the EBM are commutative because they are isotropic. Based on this, we will show later that these eigenvectors are also the eigenvectors for the eigenvalue problem for the q-EBM with eigenvalues which are the roots of a polynomial of $z$. This is the advantage of capturing the C-ev's because they are roots of a polynomial and we can analyze their structure. 

For simplicity of notation, we assume that the density of $\Omega$ equals $1$ through out the rest of this paper. Before closing this section, we describe the EBM and BVS in details. Figure given after References schematically describes the EBM. Although the EBM is a one space dimensional model, it can be formally generalized to higher space dimensions. The mathematical meaning of the generalization can be given using the spectral decompositions of the elasticity tensors of the springs in the models (see \cite{HKLNT_IPMS, KLLNY} and the references there in). Also, for the isotropic case, there is a mechanical meaning of generalization given in \cite{Flugge} for $n=1$, which is for the Burgers model and yet for the EBM.

To proceed further, we denote a displacement vector for a small deformation of $\Omega$ by $u=u(x,t)\in\R^3$ for a position a.e. $x\in \Omega$ at a time $t\in [0,T)$, where $T\in (0,\infty ]$. We also define the infinitesimal strain tensor by $e[u]:=\frac{1}{2} (\nabla u +(\nabla u)^\mathfrak{t})$
$:~\Omega\times [0,T)\to \R^{3\times 3}_\text{sym}$ with the set ${\mathbb R}^{3\times3}_{\text{sym}}$ of all symmetric matrices and the notation $\mathfrak{t}$ for transposing matrices. Also, the following symbols will be used below.
For matrices $A=(a_{pq}),~B=(b_{pq})\in \R^{3\times 3}$, their inner product is defined by
$A:B:=a_{pq}b_{pq}$, where Einstein's summation symbol is used. For a second-order tensor $\alpha=(\alpha_{pq})\in \R^{3\times 3}$ and
a fourth-order tensor
$C=(c_{pqrs})\in \R^{3\times 3\times 3\times 3}$,
their product is denoted by
$C\alpha:=(c_{pqrs}\alpha_{rs})\in\R^{3\times 3}$.
The product of two fourth order tensors $C=(c_{pqrs})$ and $D=(d_{pqrs})$ is defined by $CD=(c_{pqkl}d_{klrs})$ as well.

Now for the EBM model in Figure, we denote its pair of strain and stress as $(e_i,\sigma_i)\in \R^{3\times 3}_{\text{sym}}\times \R^{3\times 3}_{\text{sym}}$ in accordance with the labeling number $i=0,1,\cdots,n$. Also, we denote by $(e,\sigma)\in \R^{3\times 3}_{\text{sym}}\times \R^{3\times 3}_{\text{sym}}$, the pair of strain and stress of the EBM. Then, using Hooke's law for springs and the fact that the stress of each dashpot is proportional to the instantaneous speed of its strain, the equation of motion for the EBM satisfying the traction-free boundary condition over $\partial\Omega$ is given as follows (see \cite{HKLNT} for the details). 
\begin{equation}\label{s1.1}
\left\{
\begin{array}{ll}
\ddot{u}=\dvg\, \sigma &\mbox{\rm in}~\Omega\times {\mathbb R},\\
\eta_0\dot{\phi}_0=\sigma &    \mbox{\rm in}~\Omega\times {\mathbb R},\\
\eta_i\dot{\phi}_i=\sigma -C_i\phi_i, \quad i=1,\cdots,n &    \mbox{\rm in}~\Omega\times {\mathbb R},\\
\sigma\nu=0 &\mbox{\rm on}~\partial\Omega\times {\mathbb R},\\
\end{array}
\right.
\end{equation}
with the divergence operator $\text{div}$, the unit outernormal of $\partial\Omega$, and the total stress tensor $\sigma$ defined by
\begin{align}\label{s1.2}
\sigma :=C_0\psi,\quad
  \psi:=
  e[u]-\sum_{i=0}^n\phi_i=e_0^s,
\end{align}
where for each $i\,(0\le i\le n)$ corresponding to the label $i$ of Figure. Also, $\eta_i>0$ and $C_i=((C_i)_{pqrs}):=({\lambda}_i\delta_{pq}\delta_{rs}+{\mu}_i(\delta_{pr}
\delta_{qs}+\delta_{ps}\delta_{qr}))$ with Kronecker's delta $\delta_{ij}$ are the inhomogeneous viscosity and elasticity tensor of the dashpot and spring, respectively. Whenever we use the terminology inhomogeneous in this paper, the object depending on the space variable $x\in\Omega$ is bounded measurable in a bounded Lipschitz domain $\Omega\subset{\mathbb{R}}^3$, and for notational simplicity, we suppress $x$. For the FOE, $\Omega$ is the mentioned ball, and the physical objects there are all homogeneous. Also, for each $\phi_i$ with $i=0,\cdots,n$ is the strain of the $i$-th dashpot, and we denote $\phi:=(\phi_1,\cdots,\phi_n)$, 
and the time derivative by $\dot{}$ for notational convenience. Here, we assume that the Lam\'e moduli $\lambda_i,\,\mu_i$ satisfy the strong convexity condition:
\begin{equation}\label{strong convex}
\mu_i\ge\delta,\,\,3\lambda_i+2\mu_i\ge\delta,\,\,i=0,\cdots,n
\end{equation}
for some constant $\delta>0$.
Next, by eliminating $(\phi_1,\cdots,\phi_n)$ using the second, third equations of \eqref{s1.1} and assuming $(\phi_1,\cdots,\phi_n)=0$ at $t=0$, we uniquely derive a relaxation tensor $G(t)$ satisfying the causality, $G(t)=0$ for $t<0$, also it is completely monotone fourth order tensor acting on $L^\infty(\Omega;{\mathbb R}^{3\times3}_{\text{sym}})$, the set of ${\mathbb R}^{3\times3}_{\text{sym}}$ valued bounded measurable functions in $\Omega$, over the time interval $[0,\infty)$ satisfying $G(0)=C_0$, and the stress-strain relation can be given as
\begin{equation}\label{stress_strain rel}
\sigma(t,x)=\int_0^t G(t-s)e[\dot{u}](s,x)\,ds
\end{equation}
(see \cite{HKLNT, Zemanian} for the details by viewing the EBM as a one-port mechanical system). Here, note that from the causality of $G(t)$ and \eqref{stress_strain rel}, we could have assumed $\psi=0$ at $t=0$. Henceforth, we will always take this into account, so that we assume
\begin{equation}\label{zero initial strains}
\phi:=(\phi_1,\cdots,\phi_n)=0,\,\,\psi=0\,\,\,\text{at}\,\,\,t=0.
\end{equation}

Then, combining this with the first equation of \eqref{s1.1}, we have
the BVS as follows:
\begin{equation}\label{Boltzmann_1}
\ddot{u}(t,x)=\dvg\int_0^t G(t-s)e[\dot{u}](s,x)\,ds.   \end{equation}
Further, assuming $e[u](t,x)=0$ at $t=0$, we have
\begin{equation}\label{Boltzmann_2}
\ddot{u}=\dvg\{C_0 e[u](t,x)\}+\int_0^t \dot{G}(t-s)e[u](s,x)\,ds.
\end{equation}
with the instantaneous term $\dvg\{C_0 e[u](t,x)\}$ and the q-EBM operator $\dvg\{C_0 e[u](t,x)\}+\int_0^t \dot{G}(t-s)e[u](s,x)\,ds$ acting to $u(t,x)$.

\medskip
The rest of this paper is organized as follows.
In the next section, we focus on the method giving an explicit form of the relaxation kernel of the inhomogeneous isotropic EBM defined in $\Omega\subset{\mathbb R}^3$. Hence, in these two sections, the elasticity tensors and viscosities can depend on $x\in\Omega$, and they are bounded measurable in $\Omega$. Section 3 is devoted to finding some special radial eigenfunctions for the instantaneous term which are also eigenfunctions of the q-EBM. In Section 4, we provide a method to generate C-ev's for the FOE. Then in Section 5, we analytically analyze the structure of C-ev's for the FOE and give an inversion formula identifying the q-EBM by knowing the C-ev's. The final section, Section 6, is for the conclusions and discussions.

\section{Derivation of the relaxation tensor for inhomogeneous isotropic q-EBM }\label{formula I-EBM}
\setcounter{equation}{0}
In this section, we derive an explicit form of the relaxation tensor for inhomogeneous isotropic EBM. Note that we already gave a general method to derive the relaxation tensor for inhomogeneous anisotropic EBM in \cite{HKLNT}. However, the form of the relaxation tensor was not explicit enough to study the FOE. We would like to emphasize that this paper gave an explicit form of the relaxation tensor for three-dimensional inhomogeneous isotropic EBM for the first time. For the derivation given below, we use the volumetric/deviatoric decompositions and easily computable symmetric tensor block matrices \eqref{s2.11}/\eqref{s2.13} associated with the ordinary differential systems \eqref{s2.8}/\eqref{s2.9} for the volumetric/deviatoric $\psi$, $\phi_i$'s. We note that without using the decompositions, the derivation quickly hits a snag.

To begin with, we first introduce volumetric projection $I_m$ and deviatoric projection $J_m$ acting on $L^\infty(\Omega;{\mathbb R}^{3\times3}_{\text{sym}})$ as multiplication by 4th order tensors which synchronize well with the EBM under consideration. They are commutative projection operators on $L^\infty(\Omega;{\mathbb R}^{3\times3}_{\text{sym}})$ defined as multiplication by the following fourth-order tensors
\begin{equation}\label{I-J}
I_m:=(3^{-1}\delta_{ij}\delta_{kl}),\,\,J_m:=\tilde I-I_m,    
\end{equation}
where $\tilde I:=I_3\otimes I_3$ is a fourth-order tensor with the $3\times3$ identity matrix $I_3$. Here, the multiplications between $\tilde I$ and the second-order tensors, fourth-order tensors follow those for tensors defined in Section \ref{introduction}. Since $I_m\,J_m=0$, the ranges of $I_m$ and $J_m$ are orthogonal.
Then, for $0\leq j\leq n$, we have
\begin{equation}\label{s2.2}
\left\{
\begin{array}{l}
\phi^V_j:=I_m\, \phi_j=3^{-1}{\rm tr} (\phi_j)\,I_3,\\
\phi^D_j:=J_m\,\phi_j=\phi_j-\phi^V_j
\end{array}
\right.
\end{equation}
and
\begin{equation}\label{s2.3}
\left\{
\begin{array}{l}
\psi^V:=I_m\,\psi==3^{-1}{\rm tr} (\psi)\,I_3,\\
\psi^D:=J_m\,\psi=\psi-\psi^V,
\end{array}
\right.
\end{equation}
where $I_3$ is the $3\times3$ identity matrix, and $\text{tr}(\phi_j)$ denotes the trace of $\phi_j$.

Now, combining \eqref{s1.1} and \eqref{s1.2}, we have
\begin{equation}\label{s2.1}
\left\{
\begin{aligned}
\partial_t\psi&=e[\dot{u}]-(\sum_{i=0}^n\eta_i^{-1})C_0\psi+\sum_{i=1}^n\eta_i^{-1}C_i\phi_i\\
\partial_t\phi_i&=\eta_i^{-1}C_0\psi-\eta_i^{-1}C_i\phi_i, \quad 1\leq i\leq n.
\end{aligned}
\right.
\end{equation}
Rewrite \eqref{s2.1} as
\begin{equation}\label{s2.4}
\left\{
\begin{aligned}
\partial_t\psi^V+\partial_t\psi^D=&e[\dot{u}]-bC_0\psi+\sum_{i=1}^n\eta_i^{-1}C_i\phi_i\\
=&e[\dot{u}]-b(3\lambda_0+2\mu_0)\psi^V-2b\mu_0\psi^D\\
&+\sum_{i=1}^n\eta_i^{-1}(3\lambda_i+2\mu_i)\phi_i^V+\sum_{i=1}^n2\mu_i\eta_i^{-1}\phi_i^D,\\
\partial_t\phi_i^V+\partial_t\phi_i^D=&\eta_i^{-1}(3\lambda_0+2\mu_0)\psi^V+2\eta_i^{-1}\mu_0\psi^D\\
&-\eta_i^{-1}(3\lambda_i+2\mu_i)\phi_i^V-2\mu_i\eta_i^{-1}\phi_i^D, \quad 1\leq i\leq n.
\end{aligned}
\right.
\end{equation}
Note here that we have
\begin{equation}\label{I_J e}
I_m\,e[\dot u]=3^{-1}(\nabla\cdot \dot u)I_3,\,\, J_m\,e[\dot u]=e[\dot u]-3^{-1}(\nabla\cdot \dot u)I_3.
\end{equation}

\noindent
Then, from \eqref{s1.1}, \eqref{s2.4},  \eqref{I_J e} and the orthogonality of the ranges of $I_m$ and $J_m$, we have \begin{equation}\label{s2.6}
\left\{
\begin{aligned}
&\partial_t\psi^V=3^{-1}(\nabla\cdot\dot{u}) I_3-b(3\lambda_0+2\mu_0)\psi^V+\sum_{i=1}^n\eta_i^{-1}(3\lambda_i+2\mu_i)\phi_i^V,\\
&\partial_t\phi_i^V=\eta_i^{-1}(3\lambda_0+2\mu_0)\psi^V-\eta_i^{-1}(3\lambda_i+2\mu_i)\phi_i^V,\\
&\psi^V(0)=0=\phi_i^V(0),\quad 1\leq i\leq n
\end{aligned}
\right.
\end{equation}
and
\begin{equation}\label{s2.7}
\left\{
\begin{aligned}
&\partial_t\psi^D=e[\dot{u}]-3^{-1}(\nabla\cdot\dot{u}) I_3-2b\mu_0\psi^D+\sum_{i=1}^n2\mu_i\eta_i^{-1}\phi_i^D,\\
&\partial_t\phi_i^D=2\mu_0\eta_i^{-1}\psi^D-2\mu_i\eta_i^{-1}\phi_i^D,\\
&\psi^D(0)=0=\phi_i^D(0),\quad 1\leq i\leq n.
\end{aligned}
\right.
\end{equation}
Solving \eqref{s2.6} for $\psi^V$, $\phi_i^V$'s, we have 
\begin{equation}\label{s2.8}
\begin{pmatrix}
\psi^V \\
\phi_1^V\\
\cdots  \\
\phi_n^V
\end{pmatrix}
(t)=\int_0^t e^{(t-s)L_0^{\tilde I}}
\begin{pmatrix}
3^{-1}(\nabla\cdot\dot{u})I_3 \\
0\\
\cdots\\
0
\end{pmatrix}
(s)\, ds,
\end{equation}
where
\begin{equation}\label{s2.9}
L_0^{\tilde I}:=
\begin{pmatrix}
                       -b(3\lambda_0+2\mu_0)\tilde I
                       & \eta_1^{-1}(3\lambda_1+2\mu_1)\tilde I & \cdots               &    \eta_n^{-1}(3\lambda_n+2\mu_n) \tilde I   \\

                             \eta_1^{-1}(3\lambda_0+2\mu_0)\tilde I                              & -\eta_1^{-1}(3\lambda_1+2\mu_1)\tilde I & \cdots                  &            0         \\
            \cdots                                   & \cdots                   & \cdots     & \cdots                            \\
                               \eta_{n-1}^{-1}(3\lambda_0+2\mu_0) \tilde I                               & 0              & \cdots          &            0         \\
                              \eta_n^{-1}(3\lambda_0+2\mu_0)\tilde I                              & 0              & \cdots                    &   -\eta_n^{-1}(3\lambda_n+2\mu_n)\tilde I
\end{pmatrix}.
\end{equation}
\noindent
For notational simplicity, we will suppress $\tilde I$ inside the matrix of \eqref{s2.9}, and denote the suppressed one as $L_0$. We remark that the relation between the two is not merely notational, but also a mathematical relation. In fact, consider an $\left(3(n+1)\right)^2$-dimensional Hermitian space $E$ defined by
$$
E:=\{\text{tensor block matrix}\,(\alpha_{ij}\tilde I)_{0\le i,j\le n}:\alpha_{ij}\in{\mathbb R},\,0\le i,\,j\le n\}
$$
with the canonical orthogonal basis consisting of tensor block matrices $\mathcal{E}_{ij}$ for $0\le i,j\le n$ such that each $\mathcal{E}_{ij}$'s $(k,l)$-tensor block component is $\beta_{kl}\tilde I$ with 
$$
\beta_{kl}=\left\{
\begin{array}{ll}
\delta_{ij}\,\,&\text{if}\,\,(k,l)
=(i,j),\\
0\,\,&\text{if otherwise}.
\end{array}
\right.
$$
We note here that the inner product for tensor block matrices is naturally defined as follows. It is the summation of tensor block componentwise inner products, and each of these inner products is the usual componentwise inner product for tensors. 
Then, $L_0^{\tilde I}$ and $L_0$ are algebraically and topologically equivalent.
Similarly, from \eqref{s2.7}, we have
\begin{equation}\label{s2.10}
\begin{pmatrix}
\psi^D \\
\phi^D
\end{pmatrix}
(t)=\int_0^t e^{(t-s)L_1^{\tilde I}}
\begin{pmatrix}
e[\dot{u}]-3^{-1}(\nabla\cdot\dot{u}) I_3 \\
0
\end{pmatrix}
(s)\, ds,
\end{equation}
where
\begin{equation}\label{s2.11}
L_1^{\tilde I}:=
\begin{pmatrix}
                       -2b\mu_0\tilde I
                       & \eta_1^{-1}2\mu_1 \tilde I & \cdots     & \eta_{n-1}^{-1}2\mu_{n-1}  \tilde I          &    \eta_n^{-1}2\mu_n \tilde I   \\

                             \eta_1^{-1}2\mu_0  \tilde I                            & -\eta_1^{-1}2\mu_1 \tilde I& \cdots     &    0               &            0         \\
            \cdots                                   & \cdots                   & \cdots     & \cdots             & \cdots               \\
                               \eta_{n-1}^{-1}2\mu_0    \tilde I                           & 0              & \cdots     & -\eta_{n-1}^{-1}2\mu_{n-1}   \tilde I  &            0         \\
                              \eta_n^{-1}2\mu_0      \tilde I                        & 0              & \cdots     &    0               &    -\eta_n^{-1}2\mu_n \tilde I
\end{pmatrix}.
\end{equation}

Next, we aim to compute $g_{00}(t)$ and analyze its property of the tensor block matrix $e^{tL_1^{I_3}}$ given as
\begin{equation}\label{s2.12}
e^{tL_1^{\tilde I}}=
\begin{pmatrix}
g_{00}(t)\tilde I&g_{01}(t)\tilde I&\cdots&g_{0n}(t)\tilde I\\
\ldots& & &\ldots\\
g_{n0}(t)\tilde I&g_{n1}(t)\tilde I&\cdots&g_{nn}(t)\tilde I
\end{pmatrix}.
\end{equation}
For that consider 
\begin{equation}\label{s2.13}
L_1:=
\begin{pmatrix}
                       -2b\mu_0
                       & \eta_1^{-1}2\mu_1 & \cdots     & \eta_{n-1}^{-1}2\mu_{n-1}            &    \eta_n^{-1}2\mu_n    \\

                             \eta_1^{-1}2\mu_0                              & -\eta_1^{-1}2\mu_1 & \cdots     &    0               &            0         \\
            \cdots                                   & \cdots                   & \cdots     & \cdots             & \cdots               \\
                               \eta_{n-1}^{-1}2\mu_0                               & 0              & \cdots     & -\eta_{n-1}^{-1}2\mu_{n-1}     &            0         \\
                              \eta_n^{-1}2\mu_0                              & 0              & \cdots     &    0               &    -\eta_n^{-1}2\mu_n
\end{pmatrix},
\end{equation}
and compute the $(1,1)$ component of $e^{t L_1}$. Then, it is enough to compute the $(1,1)$ component of the tensor block matrix $e^{t L_1^S}$ and multiply it to $\tilde I$, where $L_1^S$ is given as

\begin{equation}\label{s2.14}
\begin{array}{ll}
L^S_1:=\\
\\
\begin{pmatrix}
                       -b2\mu_0
                       &\eta_1^{-1}\left((2\mu_0)(2\mu_1)\right)^{1/2} & \cdots     & \eta_{n-1}^{-1}\left((2\mu_0)(2\mu_{n-1})\right)^{1/2}            &    \eta_n^{-1}\left((2\mu_0)(2\mu_{n})\right)^{1/2}    \\

                             \eta_1^{-1}\left((2\mu_0)(2\mu_1)\right)^{1/2}                             & -\eta_1^{-1}2\mu_1 & \cdots     &    0               &            0         \\
            \cdots                                   & \cdots                   \cdots     & \cdots             & \cdots               \\
                               \eta_{n-1}^{-1}\left((2\mu_0)(2\mu_{n-1})\right)^{1/2}                                & 0              & \cdots     & -\eta_{n-1}^{-1}2\mu_{n-1}     &            0         \\
                             \eta_n^{-1}\left((2\mu_0)(2\mu_{n})\right)^{1/2}                               & 0              & \cdots     &    0               &   - \eta_n^{-1}2\mu_n
\end{pmatrix}.
\end{array}
\end{equation}

\noindent
\begin{lemma}\label{s_lem2.1}
$L_1^S$ is negative definite.
\end{lemma}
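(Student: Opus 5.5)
The plan is to show directly that the quadratic form of the real symmetric matrix $L_1^S$ is negative for every nonzero vector, using the quantity $b$ from \eqref{s2.4}. Writing $(\partial_t\psi,\partial_t\phi_i)$ in \eqref{s2.1} against the coefficient of $C_0\psi$ gives $b=\sum_{i=0}^n\eta_i^{-1}$; the whole argument rests on this. Set $a_0:=2\mu_0>0$ and $a_i:=2\mu_i>0$, which is guaranteed by \eqref{strong convex}. For $\xi=(\xi_0,\xi_1,\dots,\xi_n)\in\R^{n+1}$, \eqref{s2.14} gives
\begin{equation*}
\xi\cdot L_1^S\xi=-b\,a_0\xi_0^2+2\sum_{i=1}^n\eta_i^{-1}(a_0a_i)^{1/2}\xi_0\xi_i-\sum_{i=1}^n\eta_i^{-1}a_i\xi_i^2 .
\end{equation*}

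Next I split $b\,a_0\xi_0^2=\eta_0^{-1}a_0\xi_0^2+\sum_{i=1}^n\eta_i^{-1}a_0\xi_0^2$ and group the terms with index $i$. This completes squares:
\begin{equation*}
\xi\cdot L_1^S\xi=-\eta_0^{-1}a_0\xi_0^2-\sum_{i=1}^n\eta_i^{-1}\bigl(a_0^{1/2}\xi_0-a_i^{1/2}\xi_i\bigr)^2\le 0 .
\end{equation*}

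For strict negativity, suppose the form vanishes. Since $\eta_0>0$ and $a_0>0$, we get $\xi_0=0$. Then each square term forces $a_i^{1/2}\xi_i=0$, so $\xi_i=0$ for all $i$. Hence $L_1^S$ is negative definite.

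Pointwise in $x$ the constants are bounded measurable, so the identity holds a.e. Moreover $\xi\cdot L_1^S\xi\le -c|\xi|^2$ with $c>0$ depending on $\delta$ and on upper bounds of the $\eta_i$, by compactness of the unit sphere. I would remark on this uniformity only if it is needed later.

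The only step requiring care is identifying $b=\sum_{i=0}^n\eta_i^{-1}$, including the $\eta_0$ term; without that term the form would be merely semidefinite. I would also note that the same computation with $a_i=3\lambda_i+2\mu_i$ treats the volumetric block $L_0$ after the same symmetrization.
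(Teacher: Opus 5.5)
Your argument is correct and is essentially the paper's proof: the paper also uses $b=\sum_{i=0}^n\eta_i^{-1}$ and completes the square to get $(L_1^SY,Y)=-\eta_0^{-1}2\mu_0y_0^2-\sum_{i=1}^n\eta_i^{-1}(\sqrt{2\mu_0}y_0-\sqrt{2\mu_i}y_i)^2$ (its display prints $\eta_0$ where $\eta_0^{-1}$ is meant). You also spell out the strictness step ($\xi_0=0$, then each $\xi_i=0$), which the paper compresses into the bound $\le -c|Y|^2$.
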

\begin{proof}
Let $Y=(y_0,y_1,\cdots,y_n)^t$. Then, direct computations of the inner product $(L_1^SY,Y)$ give that
\begin{equation}\label{s2.15}
\begin{aligned}
(L_1^SY,Y)=&-\eta_02\mu_0y^2_0-\sum_{i=1}^n\eta_i^{-1}(\sqrt{2\mu_0}y_0-\sqrt{2\mu_i}y_i)^2\\
\leq &-c|Y|^2
\end{aligned}
\end{equation}
for some constant $c>0$.

\end{proof}

\begin{lemma}\label{s_lem2.2}
$g_{00}(t)$ is the $(1,1)$ element of $e^{tL_1^S}$. 
\end{lemma}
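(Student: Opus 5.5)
The plan is to show that $L_1$ and $L_1^S$ are related by a diagonal similarity that leaves the $(1,1)$ entry unchanged. Then the $(1,1)$ entries of $e^{tL_1}$ and $e^{tL_1^S}$ coincide, and by the equivalence between $L_1^{\tilde I}$ and $L_1$ noted after \eqref{s2.9}, this common entry is $g_{00}(t)$.

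First, by the algebraic equivalence between tensor block matrices $(\alpha_{ij}\tilde I)$ and scalar matrices $(\alpha_{ij})$, we have $e^{tL_1^{\tilde I}}=(e^{tL_1})^{\tilde I}$. Indeed, the map $(\alpha_{ij})\mapsto(\alpha_{ij}\tilde I)$ is an algebra homomorphism, since $\tilde I\tilde I=\tilde I$ under the product of fourth order tensors. It is also continuous, so it commutes with the exponential series. Hence $g_{00}(t)$ is the $(1,1)$ entry of $e^{tL_1}$, in accordance with \eqref{s2.12}.

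Next, set $D:=\mathrm{diag}(d_0,d_1,\dots,d_n)$ with $d_0=1$ and $d_i=\sqrt{\mu_0/\mu_i}$ for $1\le i\le n$. This is well defined and invertible pointwise in $x$ because of the strong convexity \eqref{strong convex}. We then compute $DL_1D^{-1}$ entrywise. The $(i,j)$ entry is $d_i(L_1)_{ij}d_j^{-1}$, so the diagonal entries are unchanged. The $(0,i)$ entry becomes
$$\eta_i^{-1}2\mu_i\sqrt{\mu_i/\mu_0}=\eta_i^{-1}\sqrt{(2\mu_0)(2\mu_i)}\,\frac{\mu_i}{\mu_0}\cdot\sqrt{\frac{\mu_0}{\mu_i}}\,\sqrt{\frac{\mu_0}{\mu_i}}.$$
The factors need care here, so we redo them directly:
$$d_0(L_1)_{0i}d_i^{-1}=\frac{2\mu_i}{\eta_i}\sqrt{\frac{\mu_i}{\mu_0}}.$$
This is not symmetric, so the exponent must be inverted. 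We therefore take $d_i=\sqrt{\mu_i/\mu_0}$ instead. Then
$$(0,i):\ \frac{2\mu_i}{\eta_i}\sqrt{\frac{\mu_0}{\mu_i}}=\eta_i^{-1}\sqrt{4\mu_0\mu_i},\qquad (i,0):\ \sqrt{\frac{\mu_i}{\mu_0}}\,\frac{2\mu_0}{\eta_i}=\eta_i^{-1}\sqrt{4\mu_0\mu_i},$$
which match \eqref{s2.14}. So $L_1^S=DL_1D^{-1}$.

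Finally, $e^{tL_1^S}=De^{tL_1}D^{-1}$, because conjugation commutes with the power series. The $(1,1)$ entry of the right-hand side is $d_0\,(e^{tL_1})_{00}\,d_0^{-1}=(e^{tL_1})_{00}=g_{00}(t)$. This proves the claim. The remark that the symmetrized matrix is negative definite (Lemma \ref{s_lem2.1}) is not needed here; it will be useful afterwards for analyzing $g_{00}$ via the spectral decomposition.

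The only real obstacle is bookkeeping. One must choose the correct direction of the diagonal scaling, and justify passing between $\tilde I$-block matrices and scalar matrices, including at the level of exponentials, pointwise in $x$.
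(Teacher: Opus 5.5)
Your argument is correct and is essentially the paper's proof. The paper factors $L_1=A^S D^\mu D^\mu$ and $L_1^S=D^\mu A^S D^\mu$ with $D^\mu=\mathrm{diag}(\sqrt{2\mu_0},\dots,\sqrt{2\mu_n})$, which amounts to $e^{tL_1^S}=D^\mu e^{tL_1}(D^\mu)^{-1}$; your $D$ is just $D^\mu/\sqrt{2\mu_0}$, and the normalization $d_0=1$ is not even needed, since any diagonal similarity preserves the diagonal entries. In a write-up, delete the abandoned first attempt with $d_i=\sqrt{\mu_0/\mu_i}$, because its displayed identity is false.
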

\begin{proof}
Define $A^S$ by
\begin{equation}\label{s2.16}
\begin{aligned}
A^S:=
\begin{pmatrix}
                       -b
                       & \eta_1^{-1} & \cdots     & \eta_{n-1}^{-1}           &    \eta_n^{-1}  \\

                             \eta_1^{-1}                           & -\eta_1^{-1} & \cdots     &    0               &            0         \\
            \cdots                                   & \cdots                   & \cdots     & \cdots             & \cdots               \\
                               \eta_{n-1}^{-1}                                & 0              & \cdots     & -\eta_{n-1}^{-1}     &            0         \\
                             \eta_n^{-1}                               & 0              & \cdots     &    0               &    -\eta_n^{-1}
\end{pmatrix}.
\end{aligned}
\end{equation}
Also, let $D^{\mu}={\rm diag}(\sqrt{2\mu_0},\sqrt{2\mu_1},\cdots,\sqrt{2\mu_n})$. Then, it is easy to see that 
\begin{equation}\label{s2.17}
\begin{aligned}
D^{\mu}A^SD^{\mu}=L_1^S.
\end{aligned}
\end{equation}
Now, observe that $g_{00}(t)$ is the $(1,1)$ element of $(2\mu_0)^{-1}D^{\mu}D^{\mu}e^{tL_1}$. Further, since $L_1=A^SD^{\mu}D^{\mu}$, we have 
\begin{equation}\label{s2.18}
\begin{aligned}
D^{\mu}D^{\mu}e^{tL_1}=D^{\mu}e^{tD^{\mu}A^SD^{\mu}}D^{\mu}=D^{\mu}e^{tL_1^S}D^{\mu}.
\end{aligned}
\end{equation}
Then, from \eqref{s2.18}, we have that $g_{00}(t)$ is the $(1,1)$ element of $e^{tL_1^S}$.

\end{proof}

\noindent
Based on Lemma \ref{s_lem2.1}, let $-\tau_j<0$ for $0\leq j\leq n$ be the eigenvalues of $L_1^S$ and the associated unit eigenvectors $v^j$ for $0\leq j\leq n$ such that
$L_1^Sv^j=-\tau_j v^j$ for $0\leq j\leq n$.
Also, let
$D^{\tau}={\rm diag}(-\tau_0,-\tau_1,\cdots,-\tau_n)$ and 
$V$ be a matrix with $n+1$-dimensional unit column vectors $v^j,\,0\le j\le n$ each of which has the i-th component $v_i^j$. Then we have
\begin{equation}\label{s2.19}
\begin{aligned}
L_1^S=VD^{\tau}V^t,
\end{aligned}
\end{equation}
which yields the following theorem.
\begin{Th}\label{s_thm2.3}
We have the following expression for $g_{00}(t)$:
\begin{equation}\label{s2.20}
\begin{aligned}
g_{00}(t)=\sum_{j=0}^n(v_0^j)^2e^{-t\tau_j}.
\end{aligned}
\end{equation}
\end{Th}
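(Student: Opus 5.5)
The plan is to combine Lemma \ref{s_lem2.2} with the spectral decomposition \eqref{s2.19}. By Lemma \ref{s_lem2.2}, $g_{00}(t)$ equals the $(1,1)$ entry of $e^{tL_1^S}$. So the task reduces to computing this entry of the exponential of a real symmetric matrix.

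First, I will check that \eqref{s2.19} holds with $V$ orthogonal. The matrix $L_1^S$ in \eqref{s2.14} is real symmetric, since its off-diagonal entries $\eta_i^{-1}((2\mu_0)(2\mu_i))^{1/2}$ appear symmetrically. Hence the unit eigenvectors $v^j$ can be chosen to form an orthonormal basis of $\R^{n+1}$; for a repeated eigenvalue one orthonormalizes within the eigenspace. Then $V$ is orthogonal, $V^tV=VV^t=I$, and $L_1^S=VD^{\tau}V^t$. By Lemma \ref{s_lem2.1} every $\tau_j>0$, though positivity is only needed to interpret the result as decaying exponentials.

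Second, I will use $(VD^\tau V^t)^k=V(D^\tau)^kV^t$, which follows from $V^tV=I$. Summing the exponential series gives
\[
e^{tL_1^S}=V e^{tD^{\tau}}V^t=V\,{\rm diag}(e^{-t\tau_0},\cdots,e^{-t\tau_n})\,V^t .
\]
The series converges absolutely, so the termwise manipulation is justified.

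Third, I read off the $(1,1)$ entry. With the paper's indexing starting at $0$, this is the $(0,0)$ entry:
\[
\bigl(e^{tL_1^S}\bigr)_{00}=\sum_{j=0}^n V_{0j}\,e^{-t\tau_j}\,V_{0j}=\sum_{j=0}^n (v_0^j)^2 e^{-t\tau_j}.
\]
Here I use that the $j$-th column of $V$ is $v^j$, so $V_{0j}=v^j_0$. Combined with Lemma \ref{s_lem2.2}, this gives \eqref{s2.20}.

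There is no real obstacle in this argument. The only point needing care is the orthonormality of the eigenvectors when eigenvalues repeat, which is handled by the choice described in the first step. As a consistency check, $\sum_j (v_0^j)^2=1$, so $g_{00}(0)=1$, matching $e^{0}=I$.
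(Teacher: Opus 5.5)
Your proposal is correct and follows the paper's proof. Both use the orthogonal diagonalization $L_1^S=VD^{\tau}V^t$ to write $e^{tL_1^S}=Ve^{tD^{\tau}}V^t$, then read off the $(1,1)$ entry and invoke Lemma \ref{s_lem2.2}. Your explicit choice of an orthonormal eigenbasis for repeated eigenvalues supplies a detail the paper leaves implicit.
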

\begin{proof}
By using \eqref{s2.19}, we have that
\begin{equation}\label{s2.21}
\begin{aligned}
e^{tL_1^S}=e^{tVD^{\tau}V^t}=Ve^{tD^{\tau}}V^t.
\end{aligned}
\end{equation}
Combining Lemma \ref{s_lem2.2} and \eqref{s2.21}, we obtain \eqref{s2.20}.

\end{proof}

Next, in the same way as we did for $e^{t L_1^{\tilde I}}$, we compute the $(1,1)$ tensor block component of the tensor block matrix $e^{t L_0^{\tilde I}}$. For that let
\begin{equation}\label{s2.22}
L_0^S:=
\begin{pmatrix}
                       -b(3\lambda_0+2\mu_0)
                       & \eta_1^{-1}h_1  & \cdots    &        \eta_{n-1}^{-1}h_{n-1}     &    \eta_n^{-1}h_n   \\

                              \eta_1^{-1}h_1                             & -\eta_1^{-1}(3\lambda_1+2\mu_1) & \cdots                  &          0  &   0         \\
            \cdots                                   & \cdots                   & \cdots     & \cdots           & \cdots                 \\
                               \eta_{n-1}^{-1}h_{n-1}                                & 0              & \cdots          &      -\eta_{n-1}^{-1}(3\lambda_{n-1}+2\mu_{n-1}) &      0         \\
                              \eta_n^{-1}h_n                              & 0              & \cdots        & 0            &   -\eta_n^{-1}(3\lambda_n+2\mu_n)
\end{pmatrix},
\end{equation}
where $h_i=\sqrt{(3\lambda_0+2\mu_0)(3\lambda_i+2\mu_i)} $ for $1\leq i \leq n$.

\medskip
\noindent
Then, we can prove the following lemma in the same as we did for $L_1^S$.

\begin{lemma}\label{s_lem2.4}
$L_0^S$ is negative definite.
\end{lemma}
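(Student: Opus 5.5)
The plan is to mirror the proof of Lemma \ref{s_lem2.1}: compute the quadratic form $(L_0^SY,Y)$ for $Y=(y_0,y_1,\cdots,y_n)^t\in\R^{n+1}$ and write it as a negative sum of squares. Throughout, $b=\sum_{i=0}^n\eta_i^{-1}$, as in \eqref{s2.1}. Set $k_i:=3\lambda_i+2\mu_i$ for $0\le i\le n$. By the strong convexity condition \eqref{strong convex}, $k_i\ge\delta>0$, so $h_i=\sqrt{k_0k_i}$ is well defined and real.

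Expanding the form gives
\begin{equation*}
(L_0^SY,Y)=-bk_0y_0^2+2\sum_{i=1}^n\eta_i^{-1}\sqrt{k_0k_i}\,y_0y_i-\sum_{i=1}^n\eta_i^{-1}k_iy_i^2 .
\end{equation*}
Split $b=\eta_0^{-1}+\sum_{i=1}^n\eta_i^{-1}$. For each $i\ge1$, the three terms $-\eta_i^{-1}k_0y_0^2$, $2\eta_i^{-1}\sqrt{k_0k_i}\,y_0y_i$ and $-\eta_i^{-1}k_iy_i^2$ combine into $-\eta_i^{-1}(\sqrt{k_0}y_0-\sqrt{k_i}y_i)^2$. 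Hence
\begin{equation*}
(L_0^SY,Y)=-\eta_0^{-1}k_0y_0^2-\sum_{i=1}^n\eta_i^{-1}\bigl(\sqrt{k_0}\,y_0-\sqrt{k_i}\,y_i\bigr)^2\le 0,
\end{equation*}
which is exactly the analogue of \eqref{s2.15}, with $2\mu_i$ replaced by $k_i$.

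The only step needing care is strictness, i.e.\ the bound $\le -c|Y|^2$. Suppose the form vanishes. The first term forces $y_0=0$, since $\eta_0,k_0>0$. Each remaining square then forces $\sqrt{k_i}\,y_i=0$, hence $y_i=0$ because $k_i>0$. So the form is negative on the unit sphere of $\R^{n+1}$. By compactness and continuity it is bounded above there by some $-c<0$, and homogeneity gives $(L_0^SY,Y)\le -c|Y|^2$.

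In the inhomogeneous setting, where the parameters depend on $x$, I would make $c$ uniform in $x$. The plan is to use boundedness of $\eta_i^{\pm1}$ and $\lambda_i,\mu_i$ together with the lower bound $\delta$, so that $c$ depends only on these bounds. Alternatively, one can simply note that pointwise negative definiteness is all that is needed for the spectral decomposition that follows.
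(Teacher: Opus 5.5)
Your proof is correct and follows the paper's approach. The paper proves this lemma by repeating the sum-of-squares computation \eqref{s2.15} for $L_1^S$, with $2\mu_i$ replaced by $3\lambda_i+2\mu_i$, which is exactly what you do; you also correctly write the first term with $\eta_0^{-1}$, where \eqref{s2.15} has the typo $\eta_0$. Your strictness argument (first $y_0=0$, then each $y_i=0$, then compactness on the unit sphere) supplies the step the paper compresses into ``for some constant $c>0$''.
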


\medskip
\noindent
Further, by representing
$e^{tL_0(x)}$ as
\begin{equation}\label{s2.23}
e^{tL_0}=
\begin{pmatrix}
g^0_{00}(t)&g^0_{01}(t)&\cdots&g^0_{0n}(t)\\
\ldots& & &\ldots\\
g^0_{n0}(t)&g^0_{n1}(t)&\cdots&g^0_{nn}(t)
\end{pmatrix},
\end{equation}
we can prove the following lemma in the same way as we proved Lemma \ref{s_lem2.2}.
\begin{lemma}\label{s_lem2.5}
$g^0_{00}(t)$ is the $(1,1)$ element of $e^{tL_0^S}$. 
\end{lemma}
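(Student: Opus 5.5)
The plan is to mimic the proof of Lemma \ref{s_lem2.2}: write $L_0$ and $L_0^S$ as congruence transforms of one common symmetric matrix by a diagonal scaling. Set $k_i:=3\lambda_i+2\mu_i$ for $0\le i\le n$; these are positive by the strong convexity condition \eqref{strong convex}. Let
$$
D^{k}:={\rm diag}(\sqrt{k_0},\sqrt{k_1},\cdots,\sqrt{k_n}).
$$
Take $A^S$ to be the same matrix as in \eqref{s2.16}. I will then check entrywise two identities. The first is $L_0=A^SD^kD^k$. Indeed, column $j$ of $A^S$ gets multiplied by $k_j$, which gives $-bk_0$ in position $(0,0)$, $\eta_i^{-1}k_0$ in position $(i,0)$, $\eta_i^{-1}k_i$ in position $(0,i)$, and $-\eta_i^{-1}k_i$ in position $(i,i)$; this is exactly $L_0$ in \eqref{s2.9} with $\tilde I$ suppressed. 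The second is $D^kA^SD^k=L_0^S$. Here entry $(i,j)$ is multiplied by $\sqrt{k_ik_j}$, which gives $-bk_0$, the off-diagonal entries $\eta_i^{-1}\sqrt{k_0k_i}=\eta_i^{-1}h_i$, and the diagonal entries $-\eta_i^{-1}k_i$; this matches \eqref{s2.22}.

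Next comes the conjugation step. From $L_0=A^S(D^k)^2$ we get $D^kL_0(D^k)^{-1}=D^kA^SD^k=L_0^S$, and therefore
$$
e^{tL_0}=(D^k)^{-1}e^{tL_0^S}D^k .
$$
This can be seen either from the power series, using $(D^k)^{-1}(L_0^S)^mD^k=L_0^m$, or equivalently from $(D^k)^2e^{tL_0}=D^ke^{tL_0^S}D^k$ as in \eqref{s2.18}. Taking the $(1,1)$ entries, the diagonal factors contribute $k_0^{-1/2}\cdot k_0^{1/2}=1$. Hence $g^0_{00}(t)=(e^{tL_0})_{11}=(e^{tL_0^S})_{11}$, which is the claim.

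The main point to watch is the same normalization as in Lemma \ref{s_lem2.2}. One must verify that the diagonal scaling cancels in the $(1,1)$ entry. Equivalently, in the form $(D^k)^2e^{tL_0}=D^ke^{tL_0^S}D^k$, the $(1,1)$ entry of the left side is $k_0\,g^0_{00}(t)$ and that of the right side is $k_0\,(e^{tL_0^S})_{11}$, so dividing by $k_0>0$ finishes the argument. Positivity of each $k_i$, from \eqref{strong convex}, is the only place the hypotheses enter, since it makes $D^k$ real and invertible.
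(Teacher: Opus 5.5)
Your proposal is correct and follows essentially the same route as the paper, which proves this lemma ``in the same way as'' Lemma~\ref{s_lem2.2}. That means the same matrix $A^S$, with the diagonal scaling $D^k={\rm diag}(\sqrt{3\lambda_i+2\mu_i})$ replacing $D^{\mu}$, giving $L_0=A^S(D^k)^2$, $D^kA^SD^k=L_0^S$, and hence equal $(1,1)$ entries of $e^{tL_0}$ and $e^{tL_0^S}$. Your entrywise verifications of both identities and of the cancellation $k_0^{-1/2}k_0^{1/2}=1$ are right.
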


\medskip
\noindent
Based on Lemma \ref{s_lem2.4}, let $-\kappa_j<0$ for $0\leq j\leq n$ be the eigenvalues of $L_0^S$ with the associated unit eigenvectors $q^j$ for $0\leq j\leq n$ such that
$L_0^Sq^j=-\kappa_j q^j$ for $0\leq j\leq n$. Also, let
$D^{\kappa}={\rm diag}(-\kappa_0,-\kappa_1,\cdots,-\kappa_n)$ and $Q$ be a matrix with $n+1$-dimensional unit column vectors $q^j,\,0\le j\le n$ each of which has the i-th component $q_i^j$. Then we have
\begin{equation}\label{s2.24}
\begin{aligned}
L_0^S=QD^{\kappa}Q^{\mathfrak{t}},
\end{aligned}
\end{equation}
where $\mathfrak{t}$ denotes the transpose.
\medskip\noindent
Then, we can prove that this yields the following theorem in the same way as we proved Theorem \ref{s_thm2.3}

\begin{Th}\label{s_thm2.6}
We have the following expression for $g_{00}^0(t)$
\begin{equation}\label{s2.25}
\begin{aligned}
g^0_{00}(t)=\sum_{j=0}^n(q_0^j)^2e^{-t\kappa_j}.
\end{aligned}
\end{equation}
\end{Th}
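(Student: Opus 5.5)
The plan is to mirror the proof of Theorem \ref{s_thm2.3}, with $L_1^S$ replaced by $L_0^S$ and $V,D^\tau$ replaced by $Q,D^\kappa$.

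First I record why the decomposition \eqref{s2.24} exists. By construction \eqref{s2.22}, $L_0^S$ is a real symmetric $(n+1)\times(n+1)$ matrix. The spectral theorem therefore gives an orthonormal eigenbasis $\{q^j\}_{j=0}^n$, so $Q$ is orthogonal, $Q^{\mathfrak t}=Q^{-1}$, and \eqref{s2.24} holds. By Lemma \ref{s_lem2.4} all the eigenvalues are negative, so they can be written as $-\kappa_j$ with $\kappa_j>0$.

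Next I exponentiate. Since $Q^{\mathfrak t}Q=I$, we have $(L_0^S)^k=Q(D^\kappa)^kQ^{\mathfrak t}$ for every $k$. Summing the power series gives
\[
e^{tL_0^S}=Qe^{tD^\kappa}Q^{\mathfrak t}.
\]
Reading off the $(1,1)$ entry, that is, the $(0,0)$ index in the paper's labeling, yields
\[
\sum_{j=0}^n q_0^j\,e^{-t\kappa_j}\,q_0^j=\sum_{j=0}^n (q_0^j)^2e^{-t\kappa_j}.
\]
Lemma \ref{s_lem2.5} identifies this entry with $g^0_{00}(t)$, which gives \eqref{s2.25}.

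The only step with real content is Lemma \ref{s_lem2.5}, which I am allowed to assume. If I had to justify it, I would copy the proof of Lemma \ref{s_lem2.2}. I set $D^{k}=\mathrm{diag}(\sqrt{3\lambda_0+2\mu_0},\dots,\sqrt{3\lambda_n+2\mu_n})$. Then $L_0=A^SD^kD^k$ and $D^kA^SD^k=L_0^S$, with $A^S$ as in \eqref{s2.16}. The identity $D^kD^ke^{tL_0}=D^ke^{tL_0^S}D^k$ follows from the power series. Taking the $(1,1)$ entries, the factor $3\lambda_0+2\mu_0$ appears on both sides and cancels. Here the strong convexity condition \eqref{strong convex} is what guarantees the square roots are real and positive.
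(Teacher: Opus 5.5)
Your proposal is correct and follows the paper's own route: write $L_0^S=QD^{\kappa}Q^{\mathfrak{t}}$, so $e^{tL_0^S}=Qe^{tD^{\kappa}}Q^{\mathfrak{t}}$, read off the $(1,1)$ entry, and invoke Lemma~\ref{s_lem2.5}, exactly as in the proof of Theorem~\ref{s_thm2.3}. Your sketch of Lemma~\ref{s_lem2.5} is also the correct analogue of the paper's $D^{\mu}$ argument: take $D^{k}=\mathrm{diag}(\sqrt{3\lambda_i+2\mu_i})$, use $L_0=A^SD^kD^k$, $D^kA^SD^k=L_0^S$ and $D^kD^ke^{tL_0}=D^ke^{tL_0^S}D^k$, which is well defined by \eqref{strong convex}.
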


Now, combining \eqref{s2.3}, \eqref{s2.20} and \eqref{s2.25}, we have 
\begin{equation}\label{s2.26}
\begin{aligned}
\psi=\psi^V+\psi^D=&\int_0^t g^0_{00}(t-s)3^{-1}(\nabla\cdot\dot{u})(s)\,ds\,I_3\\
&+\int_0^t g_{00}(t-s)\big(e[\dot{u}]-3^{-1}(\nabla\cdot\dot{u})I_3\big) (s)\,ds\\
=&\int_0^t \sum_{j=0}^n(q_0^j)^2e^{-(t-s)\kappa_j}3^{-1}(\nabla\cdot\dot{u})(s)\,ds\,I_3\\
&+\int_0^t \sum_{j=0}^n(v_0^j)^2e^{-(t-s)\tau_j}\big(e[\dot{u}]-3^{-1}(\nabla\cdot\dot{u})I_3\big) (s)\,ds.
\end{aligned}
\end{equation}
Further, from \eqref{s2.26}, we have the following explicit form of the stress-strain relation, which yields the explicit form of the relaxation tensor for the inhomogeneous isotropic EBM:
\begin{equation}\label{s2.27}
\begin{aligned}
\sigma=C_0\psi
=&(\lambda_0+\frac{2}{3}\mu_0)\int_0^t \sum_{j=0}^n(q_0^j)^2e^{-(t-s)\kappa_j}(\nabla\cdot\dot{u})(s)\,ds\,I_3\\
&+2\mu_0\int_0^t \sum_{j=0}^n(v_0^j)^2e^{-(t-s)\tau_j}\big(e[\dot{u}]-3^{-1}(\nabla\cdot\dot{u})I_3\big) (s)\,ds.
\end{aligned}
\end{equation}
By rewriting this using $e[u](0)=0$, and $\sum_{j=0}^n (q_0^j)^2=\sum_{j=0}^n (v_0^j)^2=1$, we have 
\begin{equation}\label{s2.28}
\begin{aligned}
\sigma
=&\big((\lambda_0+\frac{2}{3}\mu_0)(\nabla\cdot u)I_3\big)(t)+ 2\mu_0\big(e[u]-3^{-1}(\nabla\cdot u)I_3\big) (t)\\
&-(\lambda_0+\frac{2}{3}\mu_0)\int_0^t \sum_{j=0}^n\kappa_j(q_0^j)^2e^{-(t-s)\kappa_j}(\nabla\cdot u)(s)\,ds\,I_3\\
&-2\mu_0\int_0^t \sum_{j=0}^n\tau_j(v_0^j)^2e^{-(t-s)\tau_j}\big(e[u]-3^{-1}(\nabla\cdot u)I_3\big) (s)\,ds.
\end{aligned}
\end{equation}

\begin{remark}
\begin{itemize}

\noindent
\item[\rm{(i)}]
Up to scalar multiplication,  $(\nabla\cdot\dot u)I_3$ and $e[\dot u]-3^{-1}(\nabla\cdot\dot u)I_3$ in \eqref{s2.27} correspond to the volumetric part and deviatoric part, respectively. They are orthogonal. However, their tractions, the inner product of their contractions with the unit outer normal vector $\nu$ of $\partial\Omega$, are given as
$$
(\nabla\cdot\dot u)(e[\dot u],\nu\otimes\nu)-3^{-1}(\nabla\cdot\dot u)^2
$$
does not necessarily vanish, where $(\,\,,\,\,)$ denotes the inner product for the second order tensors. Hence, concerning the traction-free boundary condition, it is very hard to split them.
\item[\rm{(ii)}] In \eqref{s2.28}, $(\nabla\cdot u)I_3$ and $e[u]-3^{-1}(\nabla\cdot u)I_3$ appear jointly and separately outside and inside the integral, respectively. This is due to the new properties of the relaxation tensor that arise from the two aforementioned parts, and we present a new idea in the two forthcoming sections to generate the C-ev's.

\end{itemize}
\end{remark}

\section{Special eigenfunctions for the instantaneous
term}
\setcounter{equation}{0}
From this section, we prepare to generate the C-ev's for the FOE. Hence, in this section, we assume that the Earth $\Omega$ is an open ball with radius $R>0$ centered at the origin, and consider it as a homogeneous isotropic EBM. To begin with, we look for special radial functions that are eigenfunctions of the volumetric/deviatoric parts of the instantaneous part simultaneously, and satisfy the traction-free boundary condition for the instantaneous part. 

Let $Q_A$ and $Q_B$ be the operators defined as
\begin{equation}\label{s3.1}
\begin{aligned}
Q_Au:=(\lambda_0+\frac{2}{3}\mu_0)\nabla\cdot \big((\nabla\cdot u) I_3\big)=(\lambda_0+\frac{2}{3}\mu_0)\nabla (\nabla \cdot u)
\end{aligned}
\end{equation}
and
\begin{equation}\label{s3.2}
\begin{aligned}
Q_Bu:=2\mu_0\nabla\cdot \big(e[u]-3^{-1}(\nabla\cdot u) I_3\big).
\end{aligned}
\end{equation}
Also, let $H(p)$ be the Hessian of $p$ given as $H(p)=(\partial^2_{x_i,x_j}p)_{i,j=1}^3$. 
\begin{lemma}\label{s_lem3.1}
Let $u$ satisfy
\begin{equation}\label{s3.3}
\begin{array}{l}
\begin{cases}
Q_Au=-(\lambda_0+\frac{2}{3}\mu_0)k_a\, u\quad &\mbox{\rm in}\quad \Omega,\\
Q_Bu=-\frac{4}{3}\mu_0k_a\, u\quad &\mbox{\rm in}\quad \Omega,\\
\big(\lambda_0(\nabla \cdot u) I_3+2\mu_0e[u]\big)\nu=0      & {\rm on}\quad \partial\Omega
\end{cases}
\end{array} 
\end{equation}
for some constant $k_a>0$.
Then, $p:=\nabla\cdot u$ satisfies
\begin{equation}\label{s3.4}
\begin{array}{l}
\begin{cases}
\Delta p=-k_a\, p&\mbox{\rm in}\quad \Omega,\\
\big(\lambda_0p\tilde I-2\mu_0k_a^{-1}H(p)\big)\nu=0    & {\rm on}\quad \partial\Omega.
\end{cases}
\end{array}
\end{equation}
\end{lemma}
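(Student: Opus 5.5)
The plan is to read off $u$ in terms of $p$ directly from the first equation of \eqref{s3.3}, and then rewrite everything else in terms of $p$. Since $Q_Au=(\lambda_0+\frac{2}{3}\mu_0)\nabla(\nabla\cdot u)=(\lambda_0+\frac{2}{3}\mu_0)\nabla p$ by \eqref{s3.1}, we can cancel the factor $\lambda_0+\frac{2}{3}\mu_0$. This factor is positive because $3\lambda_0+2\mu_0\ge\delta>0$ by \eqref{strong convex}. The first equation of \eqref{s3.3} then becomes $\nabla p=-k_a u$. Since $k_a>0$, this gives the key representation
\begin{equation*}
u=-k_a^{-1}\nabla p\quad\text{in}\ \Omega,
\end{equation*}
so $u$ is a gradient field.

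For the interior equation, we take the divergence of this representation and obtain $p=\nabla\cdot u=-k_a^{-1}\Delta p$, that is, $\Delta p=-k_a p$ in $\Omega$. For the boundary condition, we compute the strain of a gradient. Because $\nabla u=-k_a^{-1}H(p)$ and the Hessian is symmetric, we get $e[u]=-k_a^{-1}H(p)$. We also use $\nabla\cdot u=p$. Substituting both into the traction-free condition in \eqref{s3.3} gives $\big(\lambda_0 p\,I_3-2\mu_0k_a^{-1}H(p)\big)\nu=0$ on $\partial\Omega$. This is the boundary condition in \eqref{s3.4}, where $\tilde I$ acting on $\nu$ plays the role of the identity.

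As a consistency check, the second equation of \eqref{s3.3} is not needed, but it is compatible with the first. For $u=-k_a^{-1}\nabla p$ one has $\nabla\cdot e[u]=\nabla p$, hence $Q_Bu=2\mu_0(1-\frac13)\nabla p=\frac43\mu_0\nabla p=-\frac43\mu_0k_a u$. So the hypothesis is consistent, and nothing beyond the first equation and the boundary condition is used.

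The only delicate point is regularity. We need $p$ to have second derivatives with well-defined traces on $\partial\Omega$ so that $H(p)\nu$ makes sense. I would handle this by noting that $\nabla p=-k_au\in L^2$, so $p\in H^1$, and that $\Delta p=-k_ap$ gives interior smoothness by elliptic regularity. On the boundary I would either assume $u$ is regular enough for the traction in \eqref{s3.3} to have a trace, as is implicit in the statement, or interpret both boundary conditions in the same weak sense; then $e[u]=-k_a^{-1}H(p)$ holds in that sense and the substitution is legitimate.
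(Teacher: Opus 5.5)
Your proof is correct and follows essentially the paper's route. The boundary step is identical: the paper also uses $\nabla p=-k_au$, hence $e[u]=-k_a^{-1}H(p)$, and substitutes this into the traction condition. The only difference is in the interior equation. The paper takes the divergence of both the $Q_A$ and $Q_B$ equations and adds them, whereas you take the divergence of $u=-k_a^{-1}\nabla p$, i.e.\ you use the $Q_A$ equation alone. You are right that the $Q_B$ equation is redundant for this lemma. Your reading of $\tilde I$ in \eqref{s3.4} as $I_3$ matches the paper's intent (cf.\ Lemma \ref{s_lem3.2}).
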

\begin{proof}
Direct computations give that
\begin{equation}\label{s3.5}
\begin{aligned}
 (\lambda_0+\frac{2}{3}\mu_0)\Delta p=\nabla \cdot (Q_A u)=-(\lambda_0+\frac{2}{3}\mu_0)k_a\, p
\end{aligned}
\end{equation}
and
\begin{equation}\label{s3.6}
\begin{aligned}
 2\mu_0(1-3^{-1})\Delta p=\nabla \cdot (Q_B(x)u)= -\frac{4}{3}\mu_0k_a\, p.
\end{aligned}
\end{equation}
Adding \eqref{s3.5} and \eqref{s3.6}, we have the first equation of \eqref{s3.4}. Using the first equation of \eqref{s3.3}, we have  
$$\nabla p=\nabla (\nabla \cdot u)=-k_au,\,\,\text{and hence}\,\,e[u]=-k_a^{-1}H(p).$$
Then, plugging these into the third equation of \eqref{s3.3}, we have the second equation of \eqref{s3.4}.
\end{proof}
  
\medskip\noindent
On the other hand, the converse of Lemma \ref{s_lem3.1} is also true. More precisely, we have the following lemma.

\begin{lemma}\label{s_lem3.2}
Let  $p$ satisfy
\begin{equation}\label{s3.7}
\begin{array}{l}
\begin{cases}
\Delta p=-k_b\, p&\mbox{\rm in}\quad \Omega,\\
\big(\lambda_0p I_3 -2\mu_0k_b^{-1}H(p)\big)\nu=0    & {\rm on}\quad \partial\Omega,
\end{cases}
\end{array}
\end{equation}
where $k_b>0$ is a constant. 
Then, $u:=-k_b^{-1}\nabla p$ satisfies
\begin{equation}\label{s3.8}
\begin{array}{l}
\begin{cases}
Q_Au=-(\lambda_0+\frac{2}{3}\mu_0)k_b\, u\quad &\mbox{\rm in}\quad \Omega,\\
Q_Bu=-\frac{4}{3}\mu_0k_b\, u\quad &\mbox{\rm in}\quad \Omega,\\
\big(\lambda_0(\nabla \cdot u) I_3+2\mu_0e[u]\big)\nu=0     & {\rm on}\quad \partial\Omega.
\end{cases}
\end{array} 
\end{equation}
Further, we have $p=\nabla\cdot u$, which implies the equivalence of \eqref{s3.3} and \eqref{s3.4}.
\end{lemma}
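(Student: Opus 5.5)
The plan is direct computation, using that $u:=-k_b^{-1}\nabla p$ is a gradient field. First, I will record the basic identities. Since $\Delta p=-k_b p$ in $\Omega$,
$$
\nabla\cdot u=-k_b^{-1}\Delta p=p .
$$
This already gives the final claim $p=\nabla\cdot u$. Next, because $\nabla u=-k_b^{-1}H(p)$ is symmetric,
$$
e[u]=\nabla u=-k_b^{-1}H(p).
$$
I will tacitly assume enough regularity of $p$ (e.g. $p\in C^3(\Omega)\cap C^2(\overline\Omega)$, which holds by interior elliptic regularity for the Helmholtz equation and is needed anyway for the boundary condition in \eqref{s3.7} to make sense).

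For the two interior equations I will use these identities. For $Q_A$, we have $Q_Au=(\lambda_0+\frac23\mu_0)\nabla(\nabla\cdot u)=(\lambda_0+\frac23\mu_0)\nabla p$. Since $\nabla p=-k_b u$, this equals $-(\lambda_0+\frac23\mu_0)k_b u$.

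For $Q_B$, I will use $\nabla\cdot e[u]=\frac12\Delta u+\frac12\nabla(\nabla\cdot u)$. For a gradient field, $\Delta u=-k_b^{-1}\nabla\Delta p=\nabla p$, so $\nabla\cdot e[u]=\nabla p$. Also $\nabla\cdot\big(3^{-1}(\nabla\cdot u)I_3\big)=3^{-1}\nabla p$. Therefore
$$
Q_Bu=2\mu_0\Big(1-\tfrac13\Big)\nabla p=\tfrac43\mu_0\nabla p=-\tfrac43\mu_0 k_b u .
$$
This mirrors the computation in \eqref{s3.6}.

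For the boundary condition, I will substitute $\nabla\cdot u=p$ and $e[u]=-k_b^{-1}H(p)$ into $\big(\lambda_0(\nabla\cdot u)I_3+2\mu_0e[u]\big)\nu$. This gives exactly $\big(\lambda_0 pI_3-2\mu_0k_b^{-1}H(p)\big)\nu$, which vanishes on $\partial\Omega$ by \eqref{s3.7}.

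Finally, for the equivalence of \eqref{s3.3} and \eqref{s3.4}, I will combine this lemma with Lemma \ref{s_lem3.1}. If $u$ solves \eqref{s3.3}, then $p=\nabla\cdot u$ solves \eqref{s3.4}, and the proof of Lemma \ref{s_lem3.1} shows $u=-k_a^{-1}\nabla p$. So the correspondence $u\leftrightarrow p$ is bijective, with mutually inverse maps $u\mapsto\nabla\cdot u$ and $p\mapsto -k^{-1}\nabla p$.

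No step is a genuine obstacle. The only points needing care are the vector identity $\Delta\nabla p=\nabla\Delta p$ together with the symmetry of $\nabla u$ (valid since $u$ is a gradient), and noting that $\tilde I$ in \eqref{s3.4} acts as $I_3$ in the traction expression.
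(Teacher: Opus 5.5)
Your proposal is correct and follows essentially the same direct computation as the paper: $\nabla\cdot u=p$ from the Helmholtz equation, $Q_Au$ and $Q_Bu$ via $\Delta\nabla p=\nabla\Delta p$, and the traction condition by substituting $\nabla\cdot u=p$ and $e[u]=-k_b^{-1}H(p)$. Your use of $u=-k_a^{-1}\nabla p$, taken from the proof of Lemma \ref{s_lem3.1}, to show that $u\mapsto\nabla\cdot u$ and $p\mapsto -k^{-1}\nabla p$ are mutually inverse spells out the equivalence claim that the paper leaves implicit.
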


\begin{proof}
Direct computations give that
\begin{equation}\label{s3.9}
\begin{array}{ll}
Q_A(x)u&=(\lambda_0+\frac{2}{3}\mu_0)\nabla (\nabla\cdot u) 
=-(\lambda_0+\frac{2}{3}\mu_0)k_b^{-1}\nabla \Delta p\\
&=(\lambda_0+\frac{2}{3}\mu_0)\nabla p
=-(\lambda_0+\frac{2}{3}\mu_0)k_b\,u
\end{array}
\end{equation}
and
\begin{equation}\label{s3.10}
\begin{array}{ll}
Q_B(x)u&=\mu_0\nabla (\nabla\cdot u)+\mu_0 \Delta u-\frac{2}{3}\mu_0 \nabla (\nabla\cdot u)\\
&=-\mu_0k_b^{-1}\nabla \Delta p-\mu_0 k_b^{-1}\nabla \Delta p+\frac{2}{3}\mu_0 k_b^{-1}\nabla \Delta p\\
&=\frac{4}{3}\mu_0\nabla p
=-\frac{4}{3}\mu_0k_b\,u.
\end{array}
\end{equation}
Also, we have
$$-k_b\nabla \cdot u=\nabla \cdot \nabla p=\Delta p=-k_b p$$
which implies $p=\nabla \cdot u$. Then, we can finish the proof by showing the third equation of \eqref{s3.8} by direct computations.
\end{proof}

Now, we seek $p$ as a radial function $p(x)=p_1(r)$ with $r=|x|$ to satisfy \eqref{s3.7}. By direct computations, we have
$$H(p)\nu=\nabla p_1'(r)=p_1''(r)\nu.$$
Thus, \eqref{s3.7} is transformed to 
\begin{equation}\label{s3.11}
\begin{array}{l}
\begin{cases}
r^2 p_1''(r)+2rp_1'(r)=-k_br^2 p_1(r)&\mbox{\rm for}\quad 0\leq r< R,\\
\lambda_0p_1(r)-2\mu_0k_b^{-1}p_1''(r)=0    & {\rm for}\quad r=R.
\end{cases}
\end{array}
\end{equation}
Using the first equation of \eqref{s3.11}, we have
\begin{equation}\label{s3.12}
\begin{aligned}
\lambda_0p_1(r)-2\mu_0k_b^{-1}p_1''(r)&=\lambda_0p_1(r)-2\mu_0k_b^{-1}(-k_bp_1(r)-2r^{-1}p_1'(r))\\
&=(\lambda_0+2\mu_0)p_1(r)+4\mu_0k_b^{-1}r^{-1}p_1'(r)=0.
\end{aligned}
\end{equation}
Hence, using \eqref{s3.12}, \eqref{s3.11} becomes
\begin{equation}\label{s3.13}
\begin{array}{l}
\begin{cases}
r^2 p_1''(r)+2rp_1'(r)+k_br^2 p_1(r)=0,&\mbox{\rm for}\quad 0\leq r< R,\\
(\lambda_0+2\mu_0)p_1(r)+4\mu_0k_b^{-1}r^{-1}p_1'(r)=0    & {\rm for}\quad r=R.
\end{cases}
\end{array}
\end{equation}
By putting $\sqrt{k_b} r=\eta$ and $p_2(\eta)=p_1(\eta/\sqrt{k_b})$, we rewrite \eqref{s3.13} as
\begin{equation}\label{s3.14}
\begin{array}{l}
\begin{cases}
\eta^2 p_2''(\eta)+2\eta p_2'(\eta)+\eta^2 p_2(\eta)=0,&\mbox{\rm for}\quad 0\leq \eta< \sqrt{k_b}R,\\
(\lambda_0+2\mu_0)p_2(\eta)+4\mu_0\eta^{-1}p_2'(\eta)=0    & {\rm for}\quad \eta=\sqrt{k_b}R.
\end{cases}
\end{array}
\end{equation}

Next, consider the spherical Bessel function $J(\eta)$ of order $0$ given as
\begin{equation}\label{s3.15}
\begin{array}{l}
J(\eta):=\frac{\sin \eta}{\eta}=\displaystyle\sum_{j=0}^\infty\frac{(-1)^j}{(2j+1)!}\eta^{2j},
\end{array}
\end{equation}
which is analytic in ${\mathbb R}$ (see \cite{Kor02}). Then, $J(\eta)$ satisfies
\begin{equation}\label{s3.16}
\begin{aligned}
\eta^2 J''(\eta)+2\eta J'(\eta)+\eta^2 J(\eta)=0\,\,\,\text{for}\,\,\,\eta\in{\mathbb R}.
\end{aligned}
\end{equation}
Comparing the first equation of \eqref{s3.14} and \eqref{s3.16}, we consider $p_2(\eta)$ as $J(\eta)$.

Next, concerning the second equation of \eqref{s3.14}, we search for positive roots of  $$(\lambda_0+2\mu_0)J(\eta)+4\mu_0\eta^{-1}J'(\eta)=0,$$
which is equivalent to search for positive roots of the following equation
\begin{equation}\label{s3.17}
\begin{aligned}
f(\eta):
=&\big((\lambda_0+2\mu_0)\eta^2-4\mu_0\big)\sin\eta+4\mu_0\eta\cos\eta=0.
\end{aligned}
\end{equation}

\medskip\noindent
Then, we have the following lemma.
\begin{lemma}\label{s_lem3.3}
There exists a unique $r_\ell\in((\ell-1)\pi, \ell \pi)$ such that $f(r_\ell)=0$ for each  $\ell=1,2,\cdots$. In fact, we have
$r_l\in((l-\frac{1}{2})\pi, l\pi)$ for each  $l=1,2,\cdots$. 
\end{lemma}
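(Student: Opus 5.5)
Set $a:=\lambda_0+2\mu_0$ and $m:=4\mu_0$, so that $f(\eta)=(a\eta^2-m)\sin\eta+m\eta\cos\eta$. By \eqref{strong convex}, $a-\tfrac43\mu_0=\tfrac13(3\lambda_0+2\mu_0)>0$, so $a\ge\tfrac43\mu_0$. The plan has three steps: a sign count at the endpoints, an exclusion argument on the first half of each interval, and a transversality argument for uniqueness on the second half.

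\emph{Step 1 (existence by sign change).} At $\eta=\ell\pi$ we get $f(\ell\pi)=m\ell\pi(-1)^\ell\neq0$. At $\eta=(\ell-\tfrac12)\pi$ we get $f=(a\eta^2-m)(-1)^{\ell+1}$.

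For $\ell\ge2$ we have $\eta\ge 3\pi/2$. Then $a\eta^2\ge\tfrac43\mu_0\cdot\tfrac{9\pi^2}{4}>4\mu_0$, so $a\eta^2-m>0$. Hence $f$ takes opposite signs at $(\ell-\tfrac12)\pi$ and $\ell\pi$, and the intermediate value theorem gives a root in $((\ell-\tfrac12)\pi,\ell\pi)$.

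For $\ell=1$, the Taylor expansion $f(\eta)=(a-\tfrac43\mu_0)\eta^3+O(\eta^5)$ shows $f>0$ near $0^+$. Since $f(\pi)<0$, there is a root in $(0,\pi)$.

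\emph{Step 2 (no root in the first half).} On $((\ell-1)\pi,(\ell-\tfrac12)\pi]$, $\sin\eta$ and $\cos\eta$ have the same sign. For $\ell\ge2$ we also have $a\eta^2-m>0$ there. So both terms of $f$ have the same strict sign (the sine term is nonzero, the cosine term vanishes at most at the right endpoint), and $f\ne0$. Combined with Step 1, every root in $((\ell-1)\pi,\ell\pi)$ lies in $((\ell-\tfrac12)\pi,\ell\pi)$.

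\emph{Step 3 (uniqueness on the second half).} On $((\ell-\tfrac12)\pi,\ell\pi)$ we have $\cos\eta\ne0$ and $D:=a\eta^2-m>0$. There $f=0$ is equivalent to $\Phi(\eta):=\tan\eta-k(\eta)=0$ with $k(\eta):=-m\eta/D$. At any zero, $\tan\eta=k$, so $\sec^2\eta=1+k^2$. A direct computation gives $k'=m(a\eta^2+m)/D^2$, hence at any zero
$$\Phi'=1+k^2-k'=\frac{\eta^2\,(a^2\eta^2-3am+m^2)}{D^2}.$$
This is positive as soon as $a\eta^2>3m=12\mu_0$, which holds for $\ell\ge2$ because $a\eta^2\ge\tfrac43\mu_0\cdot\tfrac{9\pi^2}{4}\approx29.6\,\mu_0$. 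A $C^1$ function on an interval whose every zero is a strict upcrossing has at most one zero, so the root is unique.

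\emph{The case $\ell=1$ is the main obstacle.} Steps 1 and 3 only use $a\eta^2>4\mu_0$ and $a\eta^2>12\mu_0$ on the relevant range. For $\ell=1$ these amount to $a\pi^2/4>4\mu_0$, i.e.\ $a>16\mu_0/\pi^2$, and a slightly stronger bound for the $12\mu_0$ inequality on the first half of the interval. Strong convexity alone gives only $a>\tfrac43\mu_0$, which is weaker.

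So for $\ell=1$ I would first try the cotangent form on $(0,\pi)$: $f=\sin\eta\,[a\eta^2+m(\eta\cot\eta-1)]$. The function $\eta\cot\eta-1$ decreases from $0$ to $-\infty$ on $(0,\pi)$. Dividing by $\eta^2$ reduces the root condition to $a/m=(1-\eta\cot\eta)/\eta^2$, and I would show the right-hand side is strictly increasing on $(0,\pi)$, which gives uniqueness in $(0,\pi)$.

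Localizing the root to $(\pi/2,\pi)$ needs $f(\pi/2)>0$, i.e.\ $a>16\mu_0/\pi^2$. This holds, for instance, when $\lambda_0\ge0$, since then $a\ge2\mu_0$. I expect this extra hypothesis must be invoked or stated for $\ell=1$, and I would flag it explicitly.
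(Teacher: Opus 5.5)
Your argument is correct and takes a different route from the paper.

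\textbf{How the paper argues.} It shows that $f$ is unimodal on each interval $((\ell-1)\pi,\ell\pi)$. From $f'(\eta)=(\lambda_0+2\mu_0)\eta^2\cos\eta+2\lambda_0\eta\sin\eta$ it asserts that $f'$ has a fixed sign on the first half $((\ell-1)\pi,(\ell-\frac12)\pi)$. It then uses the sign of $f''$ on the second half to get a single critical point $\eta_\ell\in((\ell-\frac12)\pi,\ell\pi)$. The endpoint signs then place the root in $(\eta_\ell,\ell\pi)$.

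The first-half sign claim is only valid for $\lambda_0\ge0$. On that half $\sin\eta$ and $\cos\eta$ share a sign, so for $\lambda_0<0$ the term $2\lambda_0\eta\sin\eta$ opposes $\eta^2\cos\eta$ and dominates as $\cos\eta\to0$. The paper therefore tacitly assumes $\lambda_0\ge0$, which gives $\lambda_0+2\mu_0\ge2\mu_0>16\mu_0/\pi^2$.

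\textbf{What your route buys.} You compare the signs of the two terms of $f$ directly on the first half. You replace the $f''$ analysis by a transversality computation for $\tan\eta+m\eta/D$. For $\ell\ge2$ this needs only strong convexity. Incidentally, $a^2\eta^2-3am+m^2>0$ for all $a,m$ once $\eta>3/2$, since its discriminant in $m$ is $a^2(9-4\eta^2)$. So even your bound $a\eta^2>12\mu_0$ is superfluous.

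\textbf{The case $\ell=1$.} Your flag here is a correct diagnosis, not a gap. The localization $r_1>\pi/2$ holds if and only if $\lambda_0+2\mu_0>16\mu_0/\pi^2$. Strong convexity allows $-\frac23\mu_0<\lambda_0\le(16/\pi^2-2)\mu_0$. In that range $f(\pi/2)\le0$ while $f>0$ near $0^+$. Hence the unique root in $(0,\pi)$ lies in $(0,\pi/2]$, and the ``in fact'' clause of the statement fails for $\ell=1$.

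\textbf{Closing your announced step.} You only announce strict monotonicity of $g(\eta):=(1-\eta\cot\eta)/\eta^2$ on $(0,\pi)$; it closes in one line. The partial-fraction expansion of the cotangent gives $g(\eta)=2\sum_{n\ge1}(n^2\pi^2-\eta^2)^{-1}$, which increases from $1/3$ to $+\infty$. Alternatively, $g'$ has the sign of $h(\eta)=\eta^2+\eta\sin\eta\cos\eta-2\sin^2\eta$, where $h(0)=h'(0)=0$ and $h''=4\sin\eta\,(\sin\eta-\eta\cos\eta)>0$.

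Note that $g(0^+)=1/3$ is exactly the strong-convexity threshold $a/m>1/3$. Likewise, $g(\pi/2)=4/\pi^2$ yields the sharp $\ell=1$ condition above.
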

\begin{proof}
By direct computations, we have
\begin{equation}\label{s3.18}
\begin{aligned}
f'(\eta)=(\lambda_0+2\mu_0)\eta^2\cos\eta+2\lambda_0\eta\sin\eta
\end{aligned}
\end{equation}
and
\begin{equation}\label{s3.19}
\begin{aligned}
f''(\eta)=4(\lambda_0+\mu_0)\cos\eta+\big(2\lambda_0-(\lambda_0+2\mu_0)\eta^2\big)\sin\eta.
\end{aligned}
\end{equation}
We divide $\ell$ into two cases.

\medskip
\noindent
Case 1: $\ell=2m-1$ with a positive integer $m$.

From \eqref{s3.18}, we have that 
\begin{equation}\label{s3.20}
\begin{array}{l}
\begin{cases}
f'(\eta)>0,&\mbox{\rm if}\quad \eta\in\big((2m-2)\pi, (2m-\frac{3}{2})\pi\big),\\
f'(\eta)<0    & {\rm if}\quad \eta=(2m-1)\pi.
\end{cases}
\end{array}
\end{equation}
Since
\begin{equation*}
\begin{aligned}
2\lambda_0-(\lambda_0+2\mu_0)\eta^2<0\,\,\text{for}\,\,\eta\in[(2m-\frac{3}{2})\pi,(2m-1)\pi]
\end{aligned}
\end{equation*}
if $\eta\in[(2m-\frac{3}{2})\pi,(2m-1)\pi]$. Thus, we obtain that
\begin{equation}\label{s3.21}
\begin{aligned}
f''(\eta)<0
\end{aligned}
\end{equation}
if $\eta\in\big((2m-\frac{3}{2})\pi,(2m-1)\pi)\big)$. 
By \eqref{s3.20} and \eqref{s3.21}, there exists a unique $\eta_{2m-1}\in \big((2m-\frac{3}{2})\pi,(2m-1)\pi\big)$ such that $f'(\eta_{2m-1})=0$. We also have that
\begin{equation}\label{s3.22}
\begin{array}{l}
\begin{cases}
f'(\eta)>0,&\mbox{\rm if}\quad \eta\in((2m-2)\pi, \eta_{2m-1}),\\
f'(\eta)<0    & {\rm if}\quad \eta\in(\eta_{2m-1}, (2m-1)\pi).
\end{cases}
\end{array}
\end{equation}
Hence, by $f((2m-2)\pi)> 0$, $f((2m-1)\pi)< 0$ and \eqref{s3.22}, there exists a unique $r_{2m-1}\in (\eta_{2m-1},(2m-1)\pi)$ such that $f(r_{2m-1})=0$.

\medskip\noindent
Case 2: $\ell=2m$ with a positive integer $m$.

From \eqref{s3.18}, we have 
\begin{equation}\label{s3.23}
\begin{array}{l}
\begin{cases}
f'(\eta)<0,&\mbox{\rm if}\quad \eta\in\big((2m-1)\pi, (2m-\frac{1}{2})\pi\big),\\
f'(\eta)>0    & {\rm if}\quad \eta=2m\pi.
\end{cases}
\end{array}
\end{equation}
Since
\begin{equation*}
\begin{aligned}
2\lambda_0-(\lambda_0+2\mu_0)\eta^2<0\,\,\,\,\text{for}\,\,\,\,
\eta\in[(2m-\frac{1}{2})\pi,2m\pi],
\end{aligned}
\end{equation*}
we have
\begin{equation}\label{s3.24}
\begin{aligned}
f''(\eta)>0\,\,\,\,\text{for}\,\,\,\,\eta\in[(2m-\frac{1}{2})\pi,2m\pi)].
\end{aligned}
\end{equation}
By \eqref{s3.23} and \eqref{s3.24}, there exists a unique $\eta_{2m}\in \big((2m-\frac{1}{2})\pi,2m\pi\big)$ such that $f'(\eta_{2m})=0$. We also have
\begin{equation}\label{s3.25}
\begin{array}{l}
\begin{cases}
f'(\eta)<0,&\mbox{\rm if}\quad \eta\in((2m-1)\pi, \eta_{2m}),\\
f'(\eta)>0    & {\rm if}\quad \eta\in(\eta_{2m}, 2m\pi).
\end{cases}
\end{array}
\end{equation}
Hence, by $f((2m-1)\pi)< 0$, $f(2m\pi)> 0$ and \eqref{s3.25}, there exists a unique $r_{2m}\in (\eta_{2m},2m\pi)$ such that $f(r_{2m})=0$.

\end{proof}

\medskip\noindent
Based on the previous lemma, the following lemma states that we can completely solve \eqref{s3.13} by taking $k_b=\frac{r_\ell^2}{R^2}$ for  $\ell=1,2,\cdots$.
\begin{lemma}\label{s_lem3.4}
For $\ell=1,2,\cdots$, let $y_\ell(r)=J(\frac{r_\ell}{R}r)$. Then $y_\ell$ satisfies
\begin{equation}\label{s3.26}
\begin{array}{l}
\begin{cases}
r^2 y_\ell''(r)+2ry_\ell'(r)=-k_br^2 y_\ell(r)&\mbox{\rm for}\quad 0\leq r< R,\\
(\lambda_0+2\mu_0)y_\ell(r)+4\mu_0k_b^{-1}r^{-1}y_\ell'(r)=0     & {\rm for}\quad r=R.
\end{cases}
\end{array}
\end{equation}
\end{lemma}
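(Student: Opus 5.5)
The plan is a direct verification by scaling, with $k_b:=r_\ell^2/R^2$ as indicated just before the statement. First I will check the interior ODE. Set $\eta=\frac{r_\ell}{R}r=\sqrt{k_b}\,r$. By the chain rule, $y_\ell'(r)=\sqrt{k_b}\,J'(\eta)$ and $y_\ell''(r)=k_b\,J''(\eta)$. Then
\[
r^2y_\ell''+2ry_\ell'+k_br^2y_\ell=\eta^2J''(\eta)+2\eta J'(\eta)+\eta^2J(\eta),
\]
which vanishes by \eqref{s3.16}. Since $J$ is analytic on ${\mathbb R}$ by its power series \eqref{s3.15}, this holds on all of $[0,R)$, including $r=0$.

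Next I will check the boundary condition at $r=R$. There $\eta=r_\ell$, and the same substitution gives
\[
(\lambda_0+2\mu_0)y_\ell(R)+4\mu_0k_b^{-1}R^{-1}y_\ell'(R)
=(\lambda_0+2\mu_0)J(r_\ell)+4\mu_0r_\ell^{-1}J'(r_\ell).
\]
Now I compute $J'(\eta)=(\eta\cos\eta-\sin\eta)/\eta^2$ from \eqref{s3.15} and multiply the right-hand side by $r_\ell^3>0$. The result is
\[
\big((\lambda_0+2\mu_0)r_\ell^2-4\mu_0\big)\sin r_\ell+4\mu_0r_\ell\cos r_\ell=f(r_\ell).
\]
This is zero by Lemma \ref{s_lem3.3}.

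The only step needing care is this last algebraic identification of the boundary expression with $r_\ell^{-3}f(r_\ell)$, and in particular the equivalence stated before \eqref{s3.17}. Dividing by $r_\ell^3$ is legitimate because $r_\ell>(\ell-\tfrac12)\pi>0$. Everything else is the chain rule, so I do not expect a real obstacle.
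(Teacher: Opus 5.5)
Your proof is correct and follows the paper's route. With $k_b=r_\ell^2/R^2$, the substitution $\eta=\sqrt{k_b}\,r$ reduces the interior equation to \eqref{s3.16}, and the boundary expression equals $r_\ell^{-3}f(r_\ell)$, which vanishes by Lemma \ref{s_lem3.3}; you just write out the chain-rule and algebra the paper leaves implicit, and only $r_\ell>0$ is needed for the final division.
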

\begin{proof}
The first equation of \eqref{s3.26} follows from \eqref{s3.16}. As for the second equation of \eqref{s3.26}, it follows from $f(r_\ell)=0$ for $\ell=1,2,\cdots$.
\end{proof}
Thus, we have solved \eqref{s3.13} to get $p(x)=p_1(r)$. Then, we can get $u=-k_b^{-1} \nabla p$   
satisfying \eqref{s3.8}. 
Also, by direct computations, we have that
\begin{equation}\label{s3.27}
\begin{array}{ll}
u(x)&=-k_b^{-1}  y'_\ell(r) \frac{x}{r}=-\frac{R^2}{r_\ell^2}\big(r\cos(\frac{r_\ell}{R}r)-\frac{R}{r_\ell}\sin (\frac{r_\ell}{R}r)\big)\frac{x}{r^3}\\
&=-l_b^{-3}\{\displaystyle\sum_{i=0}^\infty\big((2i)!\big)^{-1}(-1)^i r(l_b r)^{2i+1}+\displaystyle\sum_{j=1}^\infty\big((2j-1)!\big)^{-1} (-1)^j(l_br)^{2j-1}\}\frac{x}{r^3}
\end{array}
\end{equation}
for $x\in\overline\Omega$, where $r=|x|$, $l_b=\sqrt{k_b}$ and $r_\ell$, $\ell=1,2,\cdots$ satisfy 
\begin{equation}\label{s3.28}
\begin{aligned}
f(r_\ell)=(\lambda_0+2\mu_0)r^2_\ell\sin r_\ell+4\mu_0(r_\ell\cos r_\ell-\sin r_\ell)=0.
\end{aligned}
\end{equation}
Note here that, since the first terms of the summations in \eqref{s3.27} cancel out, $u(x)$ is analytic on $\overline\Omega$. Therefore, $u(x)$ of \eqref{s3.27} satisfies
\begin{equation}\label{s3.29}
\begin{array}{l}
\begin{cases}
Q_Au=-(\lambda_0+\frac{2}{3}\mu_0)\frac{r_\ell^2}{R^2}\, u\quad &\mbox{\rm in}\quad \Omega,\\
Q_Bu=-\frac{4}{3}\mu_0\frac{r_\ell^2}{R^2}\, u\quad &\mbox{\rm in}\quad \Omega,\\
\big(\lambda_0(\nabla \cdot u)\tilde I+2\mu_0e[u]\big)\nu=0     & {\rm on}\quad \partial \Omega
\end{cases}
\end{array} 
\end{equation}
for $\ell=1,2\cdots$.

\section{C-ev's for the FOE }
\setcounter{equation}{0}

In this section, we consider cluster eigenvalues for the FOE using the special eigenfunctions for the instantaneous part given in the previous section. First of all, from \eqref{s2.28}, \eqref{s3.1} and \eqref{s3.2}, the BVS becomes
\begin{equation}\label{s4.1}
\begin{array}{rcl}
\partial_t^2 u(t) &=&Q_Au(t)+Q_Bu(t)-\displaystyle\sum_{j=0}^n \kappa_j(q_0^j)^2\int_0^t e^{-\kappa_j(t-s)}Q_Au(s)\,ds\\
&&-\displaystyle\sum_{j=0}^n \tau_j(v_0^j)^2\int_0^t e^{-\tau_j(t-s)}Q_Bu(s)\,ds\qquad\text{in $(0,\infty)\times\Omega$}.
\end{array}
\end{equation}
Here and hereafter, note that likewise before, we suppressed the variable $x$.
Without loss of the generality, we assume that
\begin{equation}\label{s4.2}
\begin{aligned}
\tau_0<\tau_1<\cdots<\tau_n<\kappa_0<\kappa_1<\cdots<\kappa_n.
\end{aligned}
\end{equation}
For simplicity, we introduce the following shorthand notation 
\begin{equation}\label{s4.3}
\left\{
\begin{array}{ll}\beta_{j+1}=\tau_j,\, \alpha_{j+1}=\frac{4}{3}\mu_0\tau_j(v_0^j)^2,\,  w_{j+1}:=(\frac{4}{3}\mu_0)^{-1}\int_0^t e^{-\tau_j(t-s)}Q_Bu(s)\,ds,\,\beta_{j+n+2}=\kappa_j,
 \\
\\
 \alpha_{j+n+2}=(\lambda_0+\frac{2}{3}\mu_0)\kappa_j(q_0^j)^2, \,\,w_{j+n+2}:=(\lambda_0+\frac{2}{3}\mu_0)^{-1}\int_0^t e^{-\kappa_j(t-s)}Q_Au(s)\,ds
\end{array}
\right.
\end{equation}  
for $0\leq j\leq n$. Then, \eqref{s4.1} takes the form
\begin{equation}\label{s4.4}
\begin{aligned}
\partial_t^2 u=&Q_A u +Q_B u-  \sum_{j=1}^{2n+2} \alpha_jw_j\\
=&Q_A u +Q_B u- \sum_{j=1}^{n+1} \alpha_j\int_0^t e^{-\beta_j(t-s)}\nabla\cdot \big(\frac{3}{2}e[u]-\frac{1}{2}(\nabla\cdot u) I_3\big)(s)\,ds\\
&-\sum_{j=n+2}^{2n+2} \alpha_j\int_0^t e^{-\beta_j(t-s)}\nabla\cdot \big((\nabla\cdot u) I_3\big)(s)\,ds.
\end{aligned}
\end{equation}
Here, we note a little ahead of time that the inverse problem in the next section, in which C-ev's are used as spectral data, is to recover the coefficient $(\lambda_0+2\mu_0)k_b$ of $Q_A u+Q_B u$ with $u$ satisfying \eqref{s3.8} obtained at the end of Section 3, and $\alpha_j$'s, $\beta_j$'s.

Now, by introducing a new dependent variable $v=u'$, we write \eqref{s4.1} in the form of a system of partial differential equations that are of first order in time given as
\begin{equation}\label{s4.5}
\left\{
\begin{array}{rcl}
u'&=&v,\\
v'&=&Q_A u +Q_B u-   \sum_{i=1}^{n+2} \alpha_iw_i,\\
w_j'&=&(\frac{4}{3}\mu_0)^{-1}Q_B u -\beta_jw_j\,\,\,\text{for}\,\,\, 1\leq j \leq n+1\\
w_j'&=&(\lambda_0+\frac{2}{3}\mu_0)^{-1}Q_A u -\beta_jw_j\,\,\,\text{for}\,\,\, n+2\leq j \leq 2n+2.
\end{array}
\right.
\end{equation}
We refer to this system as an augmented system. The matrix-vector representation of this system is \begin{equation}\label{s4.6}
\begin{aligned}
U'=A_{2n+4}U,
\end{aligned}
\end{equation}
where $A_{2n+4}$ is a $2n+4$ square matrix of the second-order partial differential operators in $x$ and $U=(u,v,w_1,\cdots,w_{2n+2})^{\mathfrak{t}}$. More precisely, we consider $A_{2n+4}$ as a densely defined closed operator on the $2n+4$ number of products of $L^2(\Omega)$, denoted by $L^2(\Omega)^{2n+4}$, with the domain
\begin{equation}\label{s4.7}
D(A_{2n+4}):=\left\{
\begin{array}{ll}
U=(u,v,w_1,\cdots,w_N)^{\mathfrak{t}}\in H^2(\Omega)\times H^1(\Omega)\times L^2(\Omega)^{2n+2}:\\
\qquad\big(\lambda_0(\nabla \cdot u)\tilde I+2\mu_0e[u]\big)\nu=0  \quad    {\rm on}\quad \partial \Omega,\,t>0
\end{array}
\right\},
\end{equation}
where $H^s(\Omega),\,s=1,2$ are the $L^2(\Omega)$-based  Sobolev spaces of order $s=1,2$. Then, we have an eigenvalue problem given as
\begin{equation}\label{s4.8}
\begin{aligned}
z U=A_{2n+4}\,U.
\end{aligned}
\end{equation}
The cluster eigenvalues problems associated to \eqref{s4.8} that we are interested in are as follows. Plugging the solution $u$ depending on $\ell$ of \eqref{s3.29} into \eqref{s4.8}, we consider the following reduced eigenvalues problem given as
\begin{equation}\label{s4.9}
\begin{aligned}
z\,U^\ell=A_{2n+4}^\ell\,U^\ell.
\end{aligned}
\end{equation}
More precisely, for each $\ell\in{\mathbb N}$, we look for an eigenvalue $z\not=-\beta_j,\,1\le j\le 2n+2$ of \eqref{s4.9} with the associated eigenfunction $U^\ell=(u,v,w_1,\cdots,w_{2n+2})^{\mathfrak{t}}$ of \eqref{s4.8}. A detail form of \eqref{s4.9} is given as
\begin{equation}\label{s4.10}
\left\{
\begin{array}{ll}
zu=v,\\
zv=-(\lambda_0+2\mu_0)\frac{r_\ell^2}{R^2} u-  \sum_{i=1}^{n+2} \alpha_iw_i,\\
zw_j=-\frac{r_\ell^2}{R^2} u -\beta_jw_j, \qquad\quad&\ 1\leq j \leq 2n+2
\end{array}
\right.
\end{equation}
and the form of matrix $A^\ell_{2n+4}$ can be read from \eqref{s4.8}. It is important to note here that $u$ satisfying \eqref{s4.10} is an eigenfunction of the q-EBM with eigenvalue $z^2$ satisfying the traction-free boundary condition for the q-EBM. This is because $u$ satisfies the instantaneous elastic equation with a traction-free boundary condition and this boundary condition is invariant under adding any zero-order terms with real coefficients. 

From \eqref{s4.10},  we have the following
\begin{equation}\label{s4.11}
\left\{
\begin{array}{ll}
z^2u+(\lambda_0+2\mu_0)\frac{r_\ell^2}{R^2} u+  \sum_{i=1}^{n+2} \alpha_iw_i=0,\\
(z+\beta_j)w_j=-\frac{r_\ell^2}{R^2} u, \qquad\quad&\ 1\leq j \leq 2n+2.
\end{array}
\right.
\end{equation}
Multiplying $\Pi_{1\leq j\leq 2n+2}(z+\beta_j)$ to the first equation of \eqref{s4.11} and using the second and third equations of \eqref{s4.11}, we obtain
\begin{equation}\label{s4.12}
\begin{aligned}
(z^2+(\lambda_0+2\mu_0)\frac{r_\ell^2}{R^2})\Pi_{1\leq j\leq 2n+2}(z+\beta_j) - \frac{r_\ell^2}{R^2} \sum_{i=1}^{2n+2} \alpha_i\Pi_{1\leq j\leq 2n+2,j\neq i}(z+\beta_j)=0,
\end{aligned}
\end{equation}
which is nothing but
\begin{equation}\label{s4.13}
\begin{aligned}
((\lambda_0+2\mu_0)+\frac{R^2}{r_l^2}z^2)\Pi_{1\leq j\leq 2n+2}(z+\beta_j) -  \sum_{i=1}^{2n+2} \alpha_i\Pi_{1\leq j\leq 2n+2,j\neq i}(z+\beta_j)=0.
\end{aligned}
\end{equation}
From the arguments deriving \eqref{s4.12}, it is clear that for each $\ell\in{\mathbb N}$, the roots of \eqref{s4.12} are the eigenvalues of $A_{2n+4}^\ell$ and the associated eigenvectors can be obtained from \eqref{s3.27} and \eqref{s4.5}.
We define the characteristic polynomial $P^\ell_{2n+4}(z)$ associated to $\partial_t-A_{2n+4}^\ell$ and its limit polynomial $P_{2n+2}(z)$ as $\ell\rightarrow\infty$ by
\begin{equation}\label{s4.14}
\begin{aligned}
P^\ell_{2n+4}(z):=&((\lambda_0+2\mu_0)+\frac{R^2}{r_l^2}z^2)\Pi_{1\leq j\leq 2n+2}(z+\beta_j) -  \sum_{i=1}^{2n+2} \alpha_i\Pi_{1\leq j\leq 2n+2,j\neq i}(z+\beta_j)
\end{aligned}
\end{equation}
and
\begin{equation}\label{s4.15}
\begin{aligned}
P_{2n+2}(z):=&(\lambda_0+2\mu_0)\Pi_{1\leq j\leq 2n+2}(z+\beta_j) - \sum_{i=1}^{2n+2} \alpha_i\Pi_{1\leq j\leq 2n+2,j\neq i}(z+\beta_j)
\end{aligned}
\end{equation}
respectively. We note that $P_{2n+2}(z)$ is the charactersite polynomial associated to  
\begin{equation}\label{s4.16}
\left\{
\begin{array}{rcl}
0&=&Q_A u +Q_B u-  \sum_{j=1}^{2n+2} \alpha_jw_j,\\
w_j'&=&(\frac{4}{3}\mu_0)^{-1}Q_B u -\beta_jw_j\,\,\,\text{for}\,\,\, 1\leq j \leq n+1\\
w_j'&=&(\lambda_0+\frac{2}{3}\mu_0)^{-1}Q_A u -\beta_jw_j\,\,\,\text{for}\,\,\, n+2\leq j \leq 2n+2.
\end{array}
\right.
\end{equation}
This can be easily shown in the same way as we derived $P^\ell_{2n+4}(z)$.

\section{Structure of C-ev's and recovering q-EBM from C-ev's}
\setcounter{equation}{0}

In \cite{CDDLN1} and \cite{CDDLN2}, the roots of the characteristic polynomial equation \eqref{s5.1} given below were used to recover the relaxation tensor of the EBM describing its glassy state by the Prony series. Usually, the stretched exponential function is used instead of its approximation, the Prony series (see \cite{CDDLN1, CDDLN2} and references there in).
\begin{equation}\label{s5.1}
\begin{aligned}
P_N^k(\lambda):=(D+\frac{\lambda^2}{(2k-1)^2})\Pi_{1\leq j\leq N}(\lambda+r_j) -  \sum_{i=1}^N b_i\Pi_{1\leq j\leq N,j\neq i}(\lambda+r_j) =0.
\end{aligned}
\end{equation}
Comparing \eqref{s4.14} with \eqref{s5.1}, the notational correspondence between these two characteristic equations is given as
$$\lambda=z,\,N=2n+2,\,D=\lambda_0+2\mu_0,\, (2k-1)^2=\frac{r_\ell^2}{R^2},\,r_j=\beta_j,\,b_j=\alpha_j.$$
Here, the notation $r_\ell$ and $r_j$ is a bit confusing, but they can be distinguished by being aware that the left-hand side and right-hand side of each equation come from \eqref{s5.1} and what we have introduced before, respectively. 
Hence, we can literally transform Lemma 3.2 of \cite{CDDLN1} on the properties of the roots of \eqref{s5.1} to those of \eqref{s4.14}. Thus, we have the following lemma on the structure of C-ev's.
\begin{lemma}\label{s_lem5.1}
There exists at least $2n+2$ real roots $a^\ell_j$, $1\leq j\leq 2n+2$ of  $P^\ell_{2n+4}(z)=0$ such that 
\begin{equation}\label{s5.2}
\begin{aligned}
-\beta_{2n+2}<a^\ell_{2n+2}<-\beta_{2n+1}<a^\ell_{2n+1}<\cdots<-\beta_{2}<a^\ell_2<-\beta_{1}<a^\ell_1
\end{aligned}
\end{equation}
and
\begin{equation}
\label{s5.3}
 \sum_{i=1}^{2n+2} \frac{\alpha_i}{a^\ell_j+\beta_i} =(\lambda_0+2\mu_0)+\frac{R^2}{r_l^2}(a^\ell_j)^2,\,\,1\le j\le 2n+2.
\end{equation}
The other two roots are contained in the set $B^D_-\cup \{(-\beta_{2n+2},a^\ell_1)\} $, where $B^D_-:=\{c+id :\frac{-\beta_{2n+2}-a^\ell_1}{2}<c<\frac{-\beta_{1}-a^\ell_1}{2}\} $.
\end{lemma}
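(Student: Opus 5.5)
Since $z=-\beta_j$ is never a root (for $\alpha_j>0$, $P^\ell_{2n+4}(-\beta_j)=-\alpha_j\prod_{i\neq j}(\beta_i-\beta_j)\neq 0$), we can divide $P^\ell_{2n+4}(z)$ by $\Pi_{1\le j\le 2n+2}(z+\beta_j)$. The roots of $P^\ell_{2n+4}$ are then exactly the zeros of
\begin{equation*}
F_\ell(z):=(\lambda_0+2\mu_0)+\frac{R^2}{r_\ell^2}z^2-\sum_{i=1}^{2n+2}\frac{\alpha_i}{z+\beta_i}.
\end{equation*}
This already gives \eqref{s5.3} for every such root. We will use that $\alpha_i>0$, which holds because $\mu_0>0$, $\lambda_0+\frac23\mu_0>0$ by \eqref{strong convex}, $\tau_j,\kappa_j>0$, and $(v_0^j)^2,(q_0^j)^2>0$. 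Positivity of $v_0^j$ follows since an eigenvector of $L_1^S$ with $v_0^j=0$ would force, from the arrowhead structure, $\mu_j\ne0$ and a repeated diagonal entry. We also use the ordering $\beta_1<\cdots<\beta_{2n+2}$ from \eqref{s4.2}; if some $v_0^j$ vanishes, that term simply drops and the argument is unchanged.

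\textbf{Real interlacing roots.} On each interval $(-\beta_{i+1},-\beta_i)$, $1\le i\le 2n+1$, the function $F_\ell$ is continuous. As $z\downarrow-\beta_{i+1}$ the term $-\alpha_{i+1}/(z+\beta_{i+1})\to-\infty$, and as $z\uparrow-\beta_i$ the term $-\alpha_i/(z+\beta_i)\to+\infty$. So the intermediate value theorem gives a root $a^\ell_{i+1}$ in that interval. On $(-\beta_1,\infty)$ we have $F_\ell\to-\infty$ at $-\beta_1^+$ and $F_\ell\to+\infty$ as $z\to\infty$, giving $a^\ell_1>-\beta_1$. This produces the $2n+2$ real roots in the order \eqref{s5.2}.

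\textbf{The remaining two roots.} $P^\ell_{2n+4}$ has degree $2n+4$, so exactly two roots $z_1,z_2$ are left, counted with multiplicity. Either both are real, or they are a conjugate pair $c\pm id$, since the coefficients are real. For the real case, note that $F_\ell\to+\infty$ as $z\to-\infty$ and as $z\downarrow-\beta_{2n+2}$ the sum term tends to $+\infty$, so $F_\ell\to-\infty$ there. Hence $F_\ell$ has an odd number of zeros on $(-\infty,-\beta_{2n+2})$; combined with the count of roots this can complicate matters. I would instead locate the extra roots via Vieta's formula for the sum of roots. Comparing the coefficients of $z^{2n+4}$ and $z^{2n+3}$, the sum of all roots is $-\sum_j\beta_j$, because $z^2\Pi(z+\beta_j)$ has no $z^{2n+3}$ contribution from the other terms beyond $\frac{R^2}{r_\ell^2}\sum\beta_j$. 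Thus
\begin{equation*}
z_1+z_2=-\sum_{j=1}^{2n+2}\beta_j-\sum_{j=1}^{2n+2}a^\ell_j.
\end{equation*}
Using the interlacing $-\beta_{j+1}<a^\ell_{j+1}<-\beta_j$, the sum $\sum_j(\beta_j+a^\ell_j)$ telescopes into bounds of the form
\begin{equation*}
-\beta_{2n+2}-a^\ell_1<z_1+z_2<-\beta_1-a^\ell_1.
\end{equation*}
For a conjugate pair, $c=(z_1+z_2)/2$, which places them in $B^D_-$. In the real case, the same sum constraint together with the sign analysis of $F_\ell$ should place them in the stated real set.

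\textbf{Main obstacle.} The main obstacle is the bookkeeping of the real case: showing that the extra real roots lie in the claimed interval rather than, for example, one far to the left. Since the statement is literally Lemma 3.2 of \cite{CDDLN1} under the stated dictionary, I would, if stuck, invoke that lemma directly, after checking that its hypotheses ($b_i>0$, distinct ordered $r_j>0$, $D>0$) hold here.
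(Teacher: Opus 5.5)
Your reduction to $F_\ell$, the intermediate-value argument giving \eqref{s5.2} and \eqref{s5.3}, and the Vieta/telescoping bound $-\beta_{2n+2}-a^\ell_1<z_1+z_2<-\beta_1-a^\ell_1$ that places a conjugate pair in $B^D_-$ are all correct. Together they form a self-contained argument, whereas the paper only transfers Lemma 3.2 of \cite{CDDLN1} through the dictionary. The gap is the case where the two remaining roots are real. You leave it at ``should place them'', and the sign analysis you give there is wrong. The limit $z\downarrow-\beta_{2n+2}$ belongs to the interval $(-\beta_{2n+2},-\beta_{2n+1})$. On $(-\infty,-\beta_{2n+2})$ one approaches $-\beta_{2n+2}$ from the left, where $F_\ell\to+\infty$. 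In fact every term of $F_\ell$ is positive on that whole half-line: $\lambda_0+2\mu_0>0$, $\frac{R^2}{r_\ell^2}z^2\ge0$, and $-\alpha_i/(z+\beta_i)>0$ because $z+\beta_i<0$. So $F_\ell>0$ there, and $P^\ell_{2n+4}$ has no root to the left of $-\beta_{2n+2}$. (Were your odd count right, one remaining root would be real and would lie left of $-\beta_{2n+2}$, outside $(-\beta_{2n+2},a^\ell_1)$.) This positivity is the missing idea. The sum bound by itself only constrains $z_1+z_2$ and cannot locate two real roots individually.

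With it, the real case closes once you fix the labeling. Let $a^\ell_1$ be the largest real root; otherwise two extra roots in $(-\beta_1,\infty)$ could sit to the right of whichever root the intermediate value theorem happened to give. Here this root is explicit. By \eqref{s4.3} together with $\sum_j(v_0^j)^2=\sum_j(q_0^j)^2=1$, one has $\lambda_0+2\mu_0=\sum_i\alpha_i/\beta_i$ (this is \eqref{s5.5}), so $F_\ell(0)=0$. Moreover $F_\ell'(z)=\frac{2R^2}{r_\ell^2}z+\sum_i\alpha_i(z+\beta_i)^{-2}>0$ on $[0,\infty)$. Hence $a^\ell_1=0$ is a simple root and the largest real one, and any real remaining roots lie in $(-\beta_{2n+2},0)=(-\beta_{2n+2},a^\ell_1)$.

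One smaller correction: a vanishing $v_0^j$ does not ``simply drop''. If some $\alpha_j=0$, then $P^\ell_{2n+4}(-\beta_j)=0$ and $F_\ell$ loses its pole at $-\beta_j$, so the strict interlacing \eqref{s5.2} can fail. The lemma needs $\alpha_j>0$ for all $j$. For the arrowhead matrices $L_1^S$ and $L_0^S$ this amounts to the diagonal entries $2\mu_i/\eta_i$, respectively $(3\lambda_i+2\mu_i)/\eta_i$, $1\le i\le n$, being pairwise distinct, which the paper assumes tacitly.
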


\medskip
Now, let us formulate an inverse problem recovering the relaxation tensor of the q-EBM from the C-ev's of the FOE as follows.

\medskip
\noindent
{\bf Inverse problem:} Recover $\alpha_j,\, \beta_j$ with $1\le j\le 2n+2$ of \eqref{s4.4} and $\lambda_0+2\mu_0$ by knowing sets of the clustered eigenvalues of \eqref{s4.14}.

\medskip
Then, we have the following answer to the above inverse problem.
\begin{Th}\label{s_thm5.2}
By knowing two clusters of eigenvalues associated with $\ell=\ell_1,\,\ell_2\in{\mathbb N}$, we can recover $\alpha_j,\, \beta_j$ with $1\le j\le 2n+2$ of \eqref{s4.4} and $\lambda_0+2\mu_0$. 
\end{Th}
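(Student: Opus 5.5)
We plan to exploit the fact that $P^\ell_{2n+4}$ depends on $\ell$ only through the coefficient $c_\ell:=R^2/r_\ell^2$ of $z^2$. Expanding \eqref{s4.14}, we write
\[
P^\ell_{2n+4}(z)=c_\ell z^2\,\Pi(z)+P_{2n+2}(z),\qquad \Pi(z):=\Pi_{1\le j\le 2n+2}(z+\beta_j).
\]
Knowing a cluster means knowing all $2n+4$ roots (with multiplicity) of $P^\ell_{2n+4}$. The leading coefficient is $c_\ell$, which is unknown a priori because $r_\ell$ depends on $\lambda_0,\mu_0$ through \eqref{s3.28}. Hence from one cluster we know $P^\ell_{2n+4}/c_\ell$, a monic polynomial of degree $2n+4$:
\[
M_\ell(z):=z^2\Pi(z)+c_\ell^{-1}P_{2n+2}(z).
\]

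\emph{First step.} We take two clusters with $\ell_1\ne\ell_2$; note $c_{\ell_1}\neq c_{\ell_2}$ since $r_\ell$ is strictly increasing by Lemma \ref{s_lem3.3}. Then
\[
M_{\ell_1}-M_{\ell_2}=(c_{\ell_1}^{-1}-c_{\ell_2}^{-1})P_{2n+2}(z)
\]
is a known polynomial of degree $2n+2$. Its leading coefficient is $(c_{\ell_1}^{-1}-c_{\ell_2}^{-1})(\lambda_0+2\mu_0)$, and $\lambda_0+2\mu_0>0$ by \eqref{strong convex}. So the roots of $P_{2n+2}$ are recovered, and $P_{2n+2}$ itself is known up to the constant factor $d:=c_{\ell_1}^{-1}-c_{\ell_2}^{-1}$.

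\emph{Second step.} Subtracting $z^2\Pi$ is not yet possible since $\Pi$ is unknown. Instead we note that
\[
c_{\ell_2}^{-1}M_{\ell_1}-c_{\ell_1}^{-1}M_{\ell_2}=(c_{\ell_2}^{-1}-c_{\ell_1}^{-1})z^2\Pi(z).
\]
More concretely, $M_{\ell_1}-\theta(M_{\ell_1}-M_{\ell_2})$ equals $z^2\Pi(z)$ exactly when $\theta=c_{\ell_1}^{-1}/d$. We therefore determine the scalar $\theta$ as the unique value for which $M_{\ell_1}-\theta(M_{\ell_1}-M_{\ell_2})$ is divisible by $z^2$. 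The constant term of this polynomial is
\[
z^2\Pi+c_{\ell_1}^{-1}P_{2n+2}\ \text{evaluated at }0,\ \text{namely }(1-\theta d)c_{\ell_1}^{-1}\cdot\ldots
\]
More cleanly, the constant term of $M_{\ell_1}$ is $c_{\ell_1}^{-1}P_{2n+2}(0)$ and that of $M_{\ell_1}-M_{\ell_2}$ is $dP_{2n+2}(0)$. So $\theta$ is fixed by one linear equation, provided
\[
P_{2n+2}(0)=(\lambda_0+2\mu_0)\Pi(0)-\sum_i\alpha_i\Pi_{j\ne i}\beta_j=\Pi(0)\Big(\lambda_0+2\mu_0-\sum_i\alpha_i/\beta_i\Big)\ne0.
\]
This is the main obstacle. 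Using \eqref{s4.3} and $\sum_j(v_0^j)^2=\sum_j(q_0^j)^2=1$, we get
\[
\lambda_0+2\mu_0-\sum_i\alpha_i/\beta_i=\big(\lambda_0+\tfrac23\mu_0\big)+\tfrac43\mu_0-\big(\lambda_0+\tfrac23\mu_0\big)-\tfrac43\mu_0=0 .
\]
So $P_{2n+2}(0)=0$ (the relaxed modulus vanishes, as expected for a Maxwell-type dashpot $\eta_0$), and the constant term gives no information. We instead compare coefficients of $z^1$. Since $P_{2n+2}$ has a simple root at $0$, its $z$-coefficient is $\Pi(0)\big(-\sum_i\alpha_i/\beta_i^2\big)+\dots$, generically nonzero. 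I would verify nonvanishing via the sign of $P_{2n+2}'(0)=-\Pi(0)\,\frac{d}{dz}\big(\sum_i\alpha_i/(z+\beta_i)\big)\big|_{z=0}=\Pi(0)\sum_i\alpha_i/\beta_i^2>0$, using $\alpha_i,\beta_i>0$. Requiring the $z$-coefficient of $M_{\ell_1}-\theta(M_{\ell_1}-M_{\ell_2})$ to vanish then determines $\theta$, hence $z^2\Pi(z)$, hence the $\beta_j$ as minus its nonzero roots.

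\emph{Third step.} Knowing $\theta$ we know $c_{\ell_1}^{-1}P_{2n+2}$ exactly, but $c_{\ell_1}$ is still unknown. We then use the partial fraction identity
\[
\frac{P_{2n+2}(z)}{\Pi(z)}=(\lambda_0+2\mu_0)-\sum_i\frac{\alpha_i}{z+\beta_i}.
\]
Dividing the known $c_{\ell_1}^{-1}P_{2n+2}$ by $\Pi$ gives $c_{\ell_1}^{-1}(\lambda_0+2\mu_0)$ and $c_{\ell_1}^{-1}\alpha_i$, since the $\beta_i$ are distinct by \eqref{s4.2}. The relation $\sum_i\alpha_i/\beta_i=\lambda_0+2\mu_0$ established above is homogeneous, so it does not fix the scale.

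The remaining scale is fixed through $c_\ell=R^2/r_\ell^2$, with $R$ known and $r_\ell$ determined by \eqref{s3.28}. From the two clusters we know the ratio $c_{\ell_1}/c_{\ell_2}=r_{\ell_2}^2/r_{\ell_1}^2$, since $\theta$ yields $d c_{\ell_1}$. Equation \eqref{s3.28} depends only on $\rho:=\mu_0/(\lambda_0+2\mu_0)$, so I would show that $\rho\mapsto r_{\ell_2}(\rho)/r_{\ell_1}(\rho)$ is injective, using the localization in Lemma \ref{s_lem3.3} and the implicit function theorem. This recovers $\rho$, then $r_{\ell_1}$ and $c_{\ell_1}$, and finally $\lambda_0+2\mu_0$ and all $\alpha_j$. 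This monotonicity is the step I expect to be most delicate.
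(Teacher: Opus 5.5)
Your proposal is correct, and it takes a genuinely different route from the paper.

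\textbf{The paper's route.} The paper's proof is one sentence. It transfers Theorem~1 of \cite{CDDLN2} through the dictionary $D=\lambda_0+2\mu_0$, $(2k-1)^2=r_\ell^2/R^2$, $r_j=\beta_j$, $b_j=\alpha_j$. In that reference the $z^2$-coefficient $(2k-1)^{-2}$ is an explicit number, so both characteristic polynomials are known exactly. Then $P^{\ell_1}_{2n+4}-P^{\ell_2}_{2n+4}=(c_{\ell_1}-c_{\ell_2})z^2\Pi(z)$ isolates $\Pi$ at once, and the partial fractions of $P_{2n+2}/\Pi$ give $\lambda_0+2\mu_0$ and the $\alpha_j$. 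Transplanted here, this tacitly treats $R^2/r_\ell^2$ as known. By \eqref{s3.28}, however, it depends on the unknown ratio $\rho=\mu_0/(\lambda_0+2\mu_0)$.

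\textbf{Your route.} You avoid that assumption and use only the monic polynomials determined by the root sets. You single out $z^2\Pi$ in the pencil spanned by $M_{\ell_1},M_{\ell_2}$ using two facts. First, $P_{2n+2}(0)=0$; this is the vanishing relaxed modulus from the Maxwell dashpot, i.e.\ exactly the identity $\lambda_0+2\mu_0=\sum_j\alpha_j/\beta_j$ in \eqref{s5.5}. Second, $P_{2n+2}'(0)=\Pi(0)\sum_i\alpha_i/\beta_i^2>0$. You then fix the one remaining common scale through $\rho$. This is longer, but it is self-contained from the spectral data. As a by-product it gives $\rho$, hence $\mu_0$ and $\lambda_0$ separately, without the ordering \eqref{s4.2} that the paper's subsequent remark relies on. One cosmetic slip: the intermediate expression ``$\Pi(0)(-\sum_i\alpha_i/\beta_i^2)+\dots$'' has the wrong sign, but your subsequent computation of $P'_{2n+2}(0)$ is right.

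\textbf{Closing the step you flag as delicate.} Write \eqref{s3.28} as $F(\eta,\rho):=\eta^2-4\rho(1-\eta\cot\eta)=0$. At a root one computes $\partial_\eta F=\eta\bigl(3-4\rho-\eta^2/(4\rho)\bigr)$ and $\partial_\rho F=-\eta^2/\rho$, hence
\[
\frac{r_\ell'(\rho)}{r_\ell(\rho)}=-\frac{1}{r_\ell^2/4-\rho(3-4\rho)} .
\]
Since $\rho(3-4\rho)\le 9/16$, and $r_\ell>\pi/2>3/2$ by the localization in Lemma~\ref{s_lem3.3}, the denominator is positive and increasing in $r_\ell$. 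So for $\ell_2>\ell_1$,
\[
\frac{d}{d\rho}\log\bigl(r_{\ell_2}/r_{\ell_1}\bigr)>0,
\]
which is exactly the injectivity you need.

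\textbf{A shortcut if you accept \eqref{s4.2}.} You can bypass the monotonicity entirely. By \eqref{s5.5},
\[
\rho=\tfrac34\sum_{j=1}^{n+1}\alpha_j\beta_j^{-1}\Big/\sum_{j=1}^{2n+2}\alpha_j\beta_j^{-1},
\]
and this ratio is invariant under the unknown factor $c_{\ell_1}^{-1}$. So it can be read off directly from your Step~3 output. Then $r_{\ell_1}$, $c_{\ell_1}$ and all absolute values follow from a single cluster index.
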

\begin{proof}
In Theorem 1 of \cite{CDDLN2},  we recover $D,\,b_j,\,r_j$, $1\leq j\leq N$ of \eqref{s5.1} by using two clusters of eigenvalues. In the same way, we can recover $\lambda_0+2\mu_0$, $\alpha_j$, $\beta_j$, $1\leq j\leq 2n+2$.
\end{proof}

\begin{remark}
We can find suitable $\mu_0$ which depends on the order of \eqref{s4.2} and then get the associated $\lambda_0$, and $(v_0^j)^2$, $(q_0^j)^2$ for $0\leq j \leq n$ in \eqref{s4.1}. In fact, from the second equation and the fifth equation in \eqref{s4.3}, we have 
\begin{equation}\label{s5.4}
\begin{aligned}
\frac{\alpha_{j+1}}{\beta_{j+1}}=\frac{4\mu_0}{3}(v_0^j)^2,\quad \frac{\alpha_{j+n+2}}{\beta_{j+n+2}}=(\lambda_0+\frac{2}{3}\mu_0)(q_0^j)^2,\qquad 0\leq j \leq n.
\end{aligned}
\end{equation}
Since $ \sum_{j=0}^{n}(v_0^j)^2=1 $ and $ \sum_{j=0}^{n}(q_0^j)^2=1 $, we obtain from \eqref{s5.4} that
\begin{equation}\label{s5.5}
\begin{aligned}
\mu_0=\frac{3}{4}\sum_{j=1}^{n+1}\frac{\alpha_{j}}{\beta_{j}},\quad \lambda_0+2\mu_0=\sum_{j=1}^{2n+2}\frac{\alpha_{j}}{\beta_{j}}.
\end{aligned}
\end{equation}
Note that, by the first equation in \eqref{s5.5}, $\mu_0$ depends on the ordering \eqref{s4.2}.
By using \eqref{s5.5}, we can determine $\lambda_0$ by
\begin{equation}\label{s5.6}
\begin{aligned}
\lambda_0=\sum_{j=1}^{2n+2}\frac{\alpha_{j}}{\beta_{j}}-\frac{3}{2}\sum_{j=1}^{n+1}\frac{\alpha_{j}}{\beta_{j}}=\sum_{j=n+2}^{2n+2}\frac{\alpha_{j}}{\beta_{j}}-\frac{1}{2}\sum_{j=1}^{n+1}\frac{\alpha_{j}}{\beta_{j}}.
\end{aligned}
\end{equation}
From \eqref{s5.4}, we can derive $(v_0^j)^2$ and $(q_0^j)^2$ for $0\leq j \leq n$, respectively. 
\end{remark}

\section{Conclusions and discussions }
Being stimulated by the statement given in \cite{YP} on the importance of the EBM for analyzing an inelastic effect coming from the upper mantle to normal modes of the Earth, we analyzed the cluster eigenvalues of the Earth using a three-dimensional homogeneous isotropic EBM. When we started this research, we first noticed that an explicit form of the relaxation tensor was missing for the three-dimensional isotropic generalized Burgers model (EBM') not necessarily homogeneous. Even though in our previous research, we obtained the relaxation tensor for a more general EBM, including the anisotropic case, it lacked explicitness. It was not as easy as we thought to fill this gap. To overcome this difficulty, we found that the volumetric and deviatoric decompositions worked very well. Thus, we developed a new method to derive an explicit form of the relaxation tensor of the aforementioned EBM', which is a fundamental contribution to the people studying EBM'.

On the other hand, for generating the C-ev's of the EBM, we had to pay the price of using the aforementioned decompositions. That is, for the instantaneous part of the q-EBM, we had to find a $u$ that is an eigenfunction for both of the operators $Q_A$ and $Q_B$ with a traction-free boundary condition, simultaneously (see Section 3). We refer to this eigenvalue problem as the simultaneous eigenvalue problem. Comparing the set of eigenvalues for the instantaneous part of the q-EBM with the traction-free boundary condition and that of the simultaneous eigenvalue problem, the latter one is much smaller than the former one. This is a new feature of the C-ev's for the FOE, showing its disadvantage for seeking eigenvalues of the q-EBM. Nevertheless, the C-ev's have a very nice structure which enables to recover the relaxation tensor of the homogeneous EBM. Based on these, we speculate that the C-ev’s have the potential to become popular
in the fields of geophysics and material science.

As for some future works, we provide numerical verifications of the structure of the mentioned C-ev's and the mentioned recovery in our next paper. Also, we are planning to provide an explicit form of the relaxation tensor for the mentioned more general EBM by using the joint spectral decomposition of elasticity tensors in the EBM by assuming that they are commutative. Furthermore, we are planning to use eigenfunctions of the C-ev’s as correlation functions to numerically find the C-ev’s.

\begin{figure*}[htb]
  \centering
\fbox{
\includegraphics[width=0.6\textwidth]{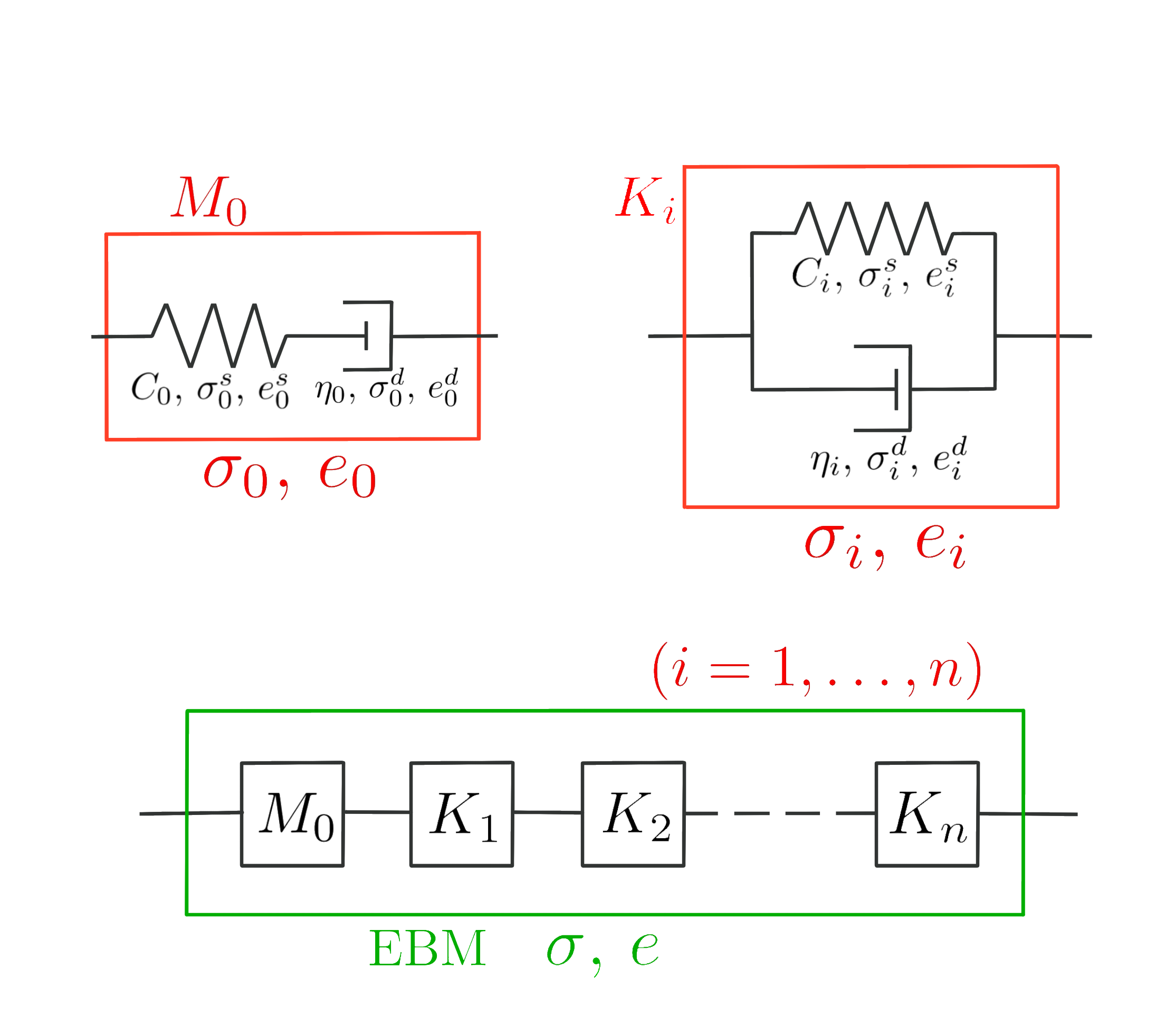}}
\caption*{The extended Burgers model consists of one Maxwell model $M_0$ and $n$ numeber of the Kelvin-Voigt models $K_i$, 
$i=1,\cdots,n.$ Also, they are connected in series. Further, the superscripts $s$ and $d$ refer to springs and dashpots, respectively.}
    \label{fig:EBM}
\end{figure*}

\end{document}